\theoremstyle{plain}
\newtheorem{corollary}{Corollary}
\newtheorem{definition}{Definition}
\newtheorem{proposition}{Proposition}
\newtheorem{theorem}{Theorem}
\numberwithin{equation}{section}
\begin{document}
\title[On The Grand Wiener Amalgam Space]{On The Grand Wiener Amalgam Spaces}
\author{A.Turan G\"{u}rkanl\i }
\address{Istanbul Arel University Faculty of Science and Letters Department
of Mathematics and Computer Sciences}
\email{turangurkanli@arel.edu.tr}
\date{}
\subjclass[2000]{Primary 46E30; Secondary 46E35; 46B70.}
\keywords{Grand Lebesgue space, generalized grand Lebesgue space, grand
Wiener Amalgam space}

\begin{abstract}
In this article, notations are included in Section 1. In Section 2, we
define the grand Wiener amalgam space by using the classical Wiener amalgam
space $\left[ 9,15,16,17\right] $ and the generalized grand Lebesgue space $%
\left[ 18,13\right] .$ Section 3, concerns the inclusions between these
spaces and some applications. In last section Section 4, we prove the H\"{o}%
lder's inequality for grand Wiener amalgam space. We also find the associate
space and dual of this space, and we prove that the grand Wiener amalgam
space is not reflexive.
\end{abstract}

\maketitle

\section{\textbf{Notations}}

Let $\Omega $ be locally compact hausdorff space and let $\left( \Omega ,%
\mathfrak{B},\mu \right) $ be finite Borel measure space. The grand Lebesgue
space $L^{p)}\left( \mu \right) $ was introduced in $\left[ 18\right] $ by
the norm

\begin{equation*}
\left\Vert {f}\right\Vert _{p)}=\sup_{0<\varepsilon \leq p-1}\left(
\varepsilon \int_{\Omega }\left\vert f\right\vert ^{p-\varepsilon }d\mu
\right) ^{\frac{1}{p-\varepsilon }}
\end{equation*}%
where $1<p<\infty .$ This is a Banach space. For $0<\varepsilon \leq p-1,$ $%
L^{p}\left( \mu \right) \subset L^{p)}\left( \mu \right) \subset $ $%
L^{p-\varepsilon }\left( \mu \right) $ hold. For some properties and
applications of $L^{p)}\left( \mu \right) $ spaces we refer to papers $\left[
1\right] ,\left[ 4\right] ,\left[ 6\right] ,\left[ 7\right] ,$ $\left[ 12%
\right] $and $\left[ 13\right] .$ A generalization of the grand Lebesgue
spaces are the spaces $L^{p),\theta }\left( \mu \right) ,$ $\theta \geq 0,$
defined by the norm $\left( \text{see }\left[ 1\right] ,\left[ 13\right]
\right) $

\begin{equation*}
\left\Vert f\right\Vert _{p),\theta ,\mu }=\left\Vert f\right\Vert
_{p),\theta }=\sup_{0<\varepsilon \leq p-1}\varepsilon ^{\frac{\theta }{%
p-\varepsilon }}\left( \int_{\Omega }\left\vert f\right\vert ^{p-\varepsilon
}d\mu \right) ^{\frac{1}{p-\varepsilon }}=\sup_{0<\varepsilon \leq
p-1}\varepsilon ^{\frac{\theta }{p-\varepsilon }}\left\Vert {f}\right\Vert
_{p-\varepsilon };
\end{equation*}%
when $\theta =0$ the space $L^{p),0}\left( \mu \right) $ reduces to Lebesgue
space $L^{p}\left( \mu \right) $ and when $\theta =1$ the space $%
L^{p),1}\left( \mu \right) $ reduces to grand Lebesgue space $L^{p)}\left(
\mu \right) .$ We have for all $1<p<\infty $ and $\varepsilon >0$ 
\begin{equation*}
L^{p}\left( \mu \right) \subset L^{p),\theta }\left( \mu \right) \subset
L^{p-\varepsilon }\left( \mu \right) .
\end{equation*}%
Different properties and applications of these spaces were discussed in $%
\left[ 1\right] ,\left[ 13\right] $ and $\left[ 4\right] .$

Let\ $1\leq p<\infty ,\theta \geq 0,$ and $J$ be the one of the set of $%
\mathbb{N}
,$ $%
\mathbb{Z}
$ or $%
\mathbb{Z}
^{n}.$ We define the grand Lebesgue sequence space $\ell ^{p),\theta }=\ell
^{p),\theta }\left( J\right) $ by the norm%
\begin{equation*}
\left\Vert u\right\Vert _{\ell ^{p),\theta }\left( J\right)
}=\sup_{0<\varepsilon \leq p-1}\left( \varepsilon ^{\theta
}\dsum\limits_{k\in J}\left\vert u_{k}\right\vert ^{p-\varepsilon }\right) ^{%
\frac{1}{p-\varepsilon }}=\sup_{0<\varepsilon \leq p-1}\varepsilon ^{\frac{%
\theta }{p-\varepsilon }}\left\Vert u\right\Vert _{\ell ^{p-\varepsilon
}\left( J\right) }.
\end{equation*}

Let $p^{^{\prime }}=\frac{p}{p-1},$ $1<p<\infty .$ First consider an
auxiliary space namely $L^{(p^{\prime },\theta }\left( \Omega \right)
,\theta >0,$ defined by 
\begin{equation*}
\left\Vert g\right\Vert _{(p^{^{\prime }},\theta
}=\inf_{g=\dsum\limits_{k=1}^{\infty }g_{k}}\left\{
\dsum\limits_{k=1}^{\infty }\inf_{0<\varepsilon \leq p-1}\varepsilon ^{-%
\frac{\theta }{p-\varepsilon }}\left( \int_{\Omega }\left\vert
g_{k}\right\vert ^{\left( p-\varepsilon \right) ^{^{\prime }}}dx\right) ^{%
\frac{1}{\left( p-\varepsilon \right) ^{^{\prime }}}}\right\}
\end{equation*}%
where the functions $g_{k},k\in 
\mathbb{N}
,$ being in $\mathcal{M}_{0},$ the set of all real valued measurable
functions, finite a.e. in $\Omega .$ After this definition the generalized
small Lebesgue spaces have been defined by 
\begin{equation*}
L^{p)^{^{\prime }},\theta }\left( \Omega \right) =\left\{ g\in \mathcal{M}%
_{0}:\left\Vert g\right\Vert _{p)^{^{\prime }},\theta }<+\infty \right\} ,
\end{equation*}%
where%
\begin{equation*}
\left\Vert g\right\Vert _{p)^{^{\prime },\theta }}=\sup_{\substack{ 0\leq
\psi \leq \left\vert g\right\vert  \\ \psi \in L^{(p^{\prime },\theta }}}%
\left\Vert \psi \right\Vert _{(p^{^{\prime }},\theta }.
\end{equation*}%
For $\theta =0$ it is $\left\Vert f\right\Vert _{(p^{^{\prime
}},0}=\left\Vert f\right\Vert _{p^{^{\prime }},\theta },\left[ 5\right] ,%
\left[ 11\right] .$

Let $1\leq p,q\leq \infty .$ The space $\left( L^{p)}\right) _{loc}$consists
of (classes of) measurable functions $f$ $:\Omega \rightarrow \mathbb{C}$
such that $f\chi _{K}\in L^{p)},$ for any compact subset $K\subset \Omega ,$%
where $\chi _{K}$ is the characteristic function of $f.$ It is a topological
vector space with the family of seminorms $f\rightarrow \left\Vert
f\right\Vert _{p)}.$ Since $L^{p}\subset L^{p)},$ it is easy to show that $%
\left( L^{p}\right) _{loc}\hookrightarrow \left( L^{p)}\right) _{loc}.$

\section{The \textbf{Grand Wiener Amalgam Space}}

\begin{definition}
Let $\Omega $ be locally compact hausdorff space and let $\left( \Omega ,%
\mathfrak{B},\mu \right) $ be finite Borel measure space. Also assume that $%
1<p,q<\infty $ and $Q\subset \Omega $ is a fix compact subset with nonemty
interior. The grand Wiener amalgam space $W\left( L^{p),\theta
_{1}},L^{q),\theta _{2}}\right) $ consists of all functions (classes of) $%
f\in \left( L^{p),\theta _{1}}\right) _{loc}$ such that the control function 
\begin{equation*}
F_{f}^{p),\theta _{1}}\left( x\right) =F_{f}^{p),\theta _{1},Q}\left(
x\right) =\left\Vert f.\chi _{Q+x}\right\Vert _{p),\theta
_{1}}=\sup_{0<\varepsilon \leq p-1}\varepsilon ^{\frac{\theta _{1}}{%
p-\varepsilon }}\left\Vert f.\chi _{Q+x}\right\Vert _{p-\varepsilon }
\end{equation*}%
lies in $L^{q),\theta _{2}}.$ The norm of $W\left( L^{p),\theta
_{1}},L^{q),\theta _{2}}\right) $ defined by 
\begin{equation*}
\left\Vert f\right\Vert _{W\left( L^{p),\theta _{1}},L^{q),\theta
_{2}}\right) }=\left\Vert F_{f}^{p),\theta _{1}}\right\Vert _{q),\theta
_{2}}=\left\Vert \left\Vert f.\chi _{Q+x}\right\Vert _{p),\theta
_{1}}\right\Vert _{q),\theta _{2}}.
\end{equation*}
\end{definition}

Since generalized grand Lebesgue spaces are not translation invariant, this
result reflects to the grand Wiener amalgam spaces. Then the definition of $%
W\left( L^{p),\theta _{1}},L^{q),\theta _{2}}\right) $ is depend of the
choice of $Q.$ We give the following theorem for the independedness.

\begin{theorem}
if $\theta _{2}=0$ then the definition of grand Wiener amalgam space $%
W\left( L^{p),\theta _{1}},L^{q),\theta _{2}}\right) $ is independent of the
choice of $Q,$ i.e., different choices of $Q$ define the same space with
equivalent norms.
\end{theorem}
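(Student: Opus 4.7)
The plan is to run the classical Wiener-amalgam independence argument; the essential point is that when $\theta_{2}=0$ the outer norm $\|\cdot\|_{q),0}$ coincides with the ordinary $L^{q}$-norm (by the remark in Section~1 that $L^{p),0}=L^{p}$) and is therefore translation invariant. Let $Q,Q'\subset\Omega$ be two compact sets with nonempty interior. By symmetry it suffices to produce a finite constant $C$ with $\|F_{f}^{p),\theta_{1},Q'}\|_{q}\leq C\,\|F_{f}^{p),\theta_{1},Q}\|_{q}$ for every admissible $f$.

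First I would cover $Q'$ by finitely many translates of $Q$. Since $Q$ has nonempty interior, pick a nonempty open set $U\subset Q$; the family $\{U+y\}_{y\in\Omega}$ is an open cover of the compact set $Q'$, so there exist $y_{1},\dots,y_{N}$ with $Q'\subset\bigcup_{i=1}^{N}(Q+y_{i})$. Translating by $x$ gives $Q'+x\subset\bigcup_{i=1}^{N}(Q+y_{i}+x)$ for every $x$, hence the pointwise bound $|f|\,\chi_{Q'+x}\leq\sum_{i=1}^{N}|f|\,\chi_{Q+y_{i}+x}$.

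Applying the inner norm $\|\cdot\|_{p),\theta_{1}}$, which is monotone in the absolute value and satisfies the triangle inequality (being a Banach norm defined as a supremum of Lebesgue norms, cf.\ $[1],[13]$), I obtain
$$F_{f}^{p),\theta_{1},Q'}(x)\leq\sum_{i=1}^{N}F_{f}^{p),\theta_{1},Q}(x+y_{i}).$$
Taking the outer $L^{q}$-norm in $x$, and using Minkowski together with the translation invariance of $\|\cdot\|_{q}$, yields the desired bound with $C=N$; the reverse direction follows by interchanging the roles of $Q$ and $Q'$. The step where the hypothesis $\theta_{2}=0$ is genuinely used is precisely this last one: for $\theta_{2}>0$, $\|\cdot\|_{q),\theta_{2}}$ is not translation invariant and the identity $\|F_{f}^{p),\theta_{1},Q}(\cdot+y_{i})\|_{q),\theta_{2}}=\|F_{f}^{p),\theta_{1},Q}\|_{q),\theta_{2}}$ fails. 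This is the only real obstacle, and it is bypassed by the hypothesis; every other ingredient (compactness covering, monotonicity, triangle inequality) is insensitive to the value of $\theta_{1}$.
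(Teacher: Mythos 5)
Your proof is correct and follows essentially the same route as the paper: the paper reduces to the case of outer space $L^{q),0}=L^{q}$ and then invokes the standard covering-by-translates argument of Proposition 11.3.2(b) in $\left[15\right]$ (equivalently Theorem 2 in $\left[9\right]$), which is exactly the argument you have written out, including the correct identification of translation invariance of the outer norm as the one place where $\theta_{2}=0$ is needed. The only caveat, shared equally by the paper, is that the covering and the step $\left\Vert F_{f}^{p),\theta_{1},Q}(\cdot+y_{i})\right\Vert _{q}=\left\Vert F_{f}^{p),\theta_{1},Q}\right\Vert _{q}$ implicitly require $\Omega$ to carry a group structure with translation-invariant measure, which the paper's stated hypotheses (finite Borel measure on a locally compact Hausdorff space) do not literally guarantee.
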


\begin{proof}
We know that when $\theta _{2}=0,$ the generalized grand Lebesgue space
reduces to lebesgue space $L^{p}\left( \Omega \right) .$ Then if $\theta
_{2}=0,$ the grand Wiener amalgam space $W\left( L^{p),\theta
_{1}},L^{q),\theta _{2}}\right) $ reduces to $W\left( L^{p),\theta
_{1}},L^{q}\right) .$ Since $L^{q}\left( \Omega \right) $ is solid and
strongly translation invariant, by using the proof technic in Proposition
11.3.2 (b) in $\left[ 15\right] $ or Theorem 2 in $\left[ 9\right] ,$ the
proof is completed.
\end{proof}

\begin{theorem}
Let $1<p,q<\infty ,$ and $\theta \geq 0.$ Then $W\left( L^{p)\,,\theta
},L^{q),\theta }\right) $ is a Banach function space (shortly BF-space),
namely its norm satisfies the following properties, where $f,g$ and $f_{n%
\text{ }}$are in $W\left( L^{p),\theta },L^{q),\theta }\right) ,$ $\lambda
\geq 0$ and $E$ is a measurable subset of $\Omega :$
\end{theorem}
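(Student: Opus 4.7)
The plan is to check the standard Banach function space axioms (nonnegativity and definiteness, positive homogeneity, the triangle inequality, solidity, and a Fatou/monotone convergence property, which together with the latter yields completeness) by pushing each axiom through the two-level structure of the norm
$$\|f\|_{W(L^{p),\theta},L^{q),\theta})}=\bigl\|F_f^{p),\theta}\bigr\|_{q),\theta},\qquad F_f^{p),\theta}(x)=\|f\cdot\chi_{Q+x}\|_{p),\theta}.$$
Since $L^{p),\theta}$ and $L^{q),\theta}$ are themselves Banach function spaces (by the results cited in $[1]$ and $[13]$), each axiom for $W(L^{p),\theta},L^{q),\theta})$ should reduce to the corresponding axiom for the inner norm applied to $f\chi_{Q+x}$, followed by the same axiom for the outer norm applied to $F_f^{p),\theta}$.

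Nonnegativity, positive homogeneity, and definiteness are immediate: $F_{\lambda f}^{p),\theta}=\lambda F_f^{p),\theta}$ for $\lambda\geq 0$ by inner homogeneity, and taking the outer norm transfers the identity; for definiteness, $\|f\|_W=0$ forces $F_f^{p),\theta}(x)=0$ for a.e.\ $x$, hence $f\chi_{Q+x}=0$ a.e.\ for a.e.\ $x$, and since $Q$ has nonempty interior its translates cover $\Omega$, giving $f=0$ a.e. For the triangle inequality, apply the subadditivity of $\|\cdot\|_{p),\theta}$ to $(f+g)\chi_{Q+x}$ to obtain the pointwise bound $F_{f+g}^{p),\theta}\leq F_f^{p),\theta}+F_g^{p),\theta}$, and then combine solidity and subadditivity of $\|\cdot\|_{q),\theta}$ to close. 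For solidity, if $|g|\leq|f|$ a.e.\ on a measurable set $E$, then $|g\chi_{Q+x}|\leq|f\chi_{Q+x}|$ pointwise, so inner solidity yields $F_g^{p),\theta}\leq F_f^{p),\theta}$, and outer solidity finishes the step.

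The delicate property is the Fatou lift for $0\leq f_n\uparrow f$ a.e. First I would show $F_{f_n}^{p),\theta}(x)\uparrow F_f^{p),\theta}(x)$ for every $x$: the classical monotone convergence theorem yields $\int_{\Omega}|f_n\chi_{Q+x}|^{p-\varepsilon}\,d\mu\uparrow\int_{\Omega}|f\chi_{Q+x}|^{p-\varepsilon}\,d\mu$ for each fixed $\varepsilon\in(0,p-1]$, and since the $\varepsilon$-weights $\varepsilon^{\theta/(p-\varepsilon)}$ do not depend on $n$, the weighted quantities are monotone in $n$; the supremum of a pointwise monotone family equals the monotone limit of the suprema, which gives the inner lift. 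Applying the identical argument at the outer level to the monotone family $\{F_{f_n}^{p),\theta}\}$ yields $\|f_n\|_W\uparrow\|f\|_W$. Completeness of $W(L^{p),\theta},L^{q),\theta})$ is then a standard consequence of nonnegativity, the triangle inequality, and the Fatou property (one takes a Cauchy sequence, extracts a subsequence summing absolutely, and uses Fatou to identify the sum with an element of the space whose norm is controlled).

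The main obstacle is the justification of the interchange of $\sup_{0<\varepsilon\leq p-1}$ with the monotone limit $n\to\infty$ at both the inner and outer levels; however, because every quantity involved is monotone in $n$, this reduces to the elementary identity $\sup_\varepsilon\lim_n a_{n,\varepsilon}=\lim_n\sup_\varepsilon a_{n,\varepsilon}$ for increasing double sequences, so no deeper tool than the monotone convergence theorem is needed. Everything else in the argument is bookkeeping that mirrors the classical Wiener amalgam construction of $[9,15]$.
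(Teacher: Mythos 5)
Your treatment of properties 1--6 is sound and follows essentially the same route as the paper: homogeneity and positivity are immediate, the triangle inequality and solidity are obtained by applying the corresponding inner axiom to $f\chi_{Q+x}$ pointwise in $x$ and then the outer axiom to the control functions, and the Fatou property is the interchange $\sup_{\varepsilon}\lim_{n}=\lim_{n}\sup_{\varepsilon}$ for families increasing in $n$, applied once at each level (the paper invokes Proposition 2.1 of $[1]$ for exactly this). Folding completeness in at the end is harmless, though the paper defers it to a separate theorem.

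The genuine gap is that you never address properties 7 and 8 of the statement, which are the remaining two axioms of a Banach function space in the sense of Bennett--Sharpley and are precisely the ones that do not follow by formally lifting an inner axiom through the outer norm. Property 7, $\left\Vert \chi _{E}\right\Vert _{W\left( L^{p),\theta },L^{q),\theta }\right) }<\infty $ for every measurable $E$, genuinely uses the finiteness of $\mu$: one bounds $\left\Vert \chi _{E\cap (Q+x)}\right\Vert _{p-\varepsilon }\leq \mu \left( \Omega \right) ^{1/(p-\varepsilon )}$ and then controls $\sup_{0<\varepsilon \leq p-1}\varepsilon ^{\theta /(p-\varepsilon )}\mu \left( \Omega \right) ^{1/(p-\varepsilon )}$ by a constant (splitting the cases $\mu(\Omega)<1$ and $\mu(\Omega)>1$, as the paper does). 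Property 8, $\int_{E}\left\vert f\right\vert \,d\mu \leq C\left\Vert f\right\Vert _{W\left( L^{p),\theta },L^{q),\theta }\right) }$, requires an embedding argument: fix some $\varepsilon$, use H\"{o}lder's inequality for the classical amalgams to get $\int_{E}\left\vert f\right\vert \,d\mu \leq \left\Vert f\right\Vert _{W\left( L^{p-\varepsilon },L^{q-\varepsilon }\right) }\left\Vert \chi _{E}\right\Vert _{W\left( L^{(p-\varepsilon )^{\prime }},L^{(q-\varepsilon )^{\prime }}\right) }$, and then dominate $\left\Vert f\right\Vert _{W\left( L^{p-\varepsilon },L^{q-\varepsilon }\right) }$ by the grand amalgam norm. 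Without these two axioms you have only verified that the functional is a solid, complete lattice norm, not that $W\left( L^{p),\theta },L^{q),\theta }\right) $ is a Banach function space; and the later results of the paper (the H\"{o}lder inequality, the associate space, and the duality and reflexivity statements via Corollaries 4.3 and 4.4 of $[3]$) depend specifically on axioms 7 and 8 being in force.
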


$1.$ $\left\Vert f\right\Vert _{W\left( L^{p),\theta },L^{q),\theta }\right)
}\geq 0$

$2.\left\Vert f\right\Vert _{W\left( L^{p),\theta },L^{q),\theta }\right)
}=0 $ if and only if $f=0$ a.e in $\Omega $

$3.\left\Vert \lambda f\right\Vert _{W\left( L^{p),\theta },L^{q),\theta
}\right) }=\lambda \left\Vert f\right\Vert _{W\left( L^{p),\theta
},L^{q),\theta }\right) }$

$4.\left\Vert f+g\right\Vert _{W\left( L^{p),\theta },L^{q),\theta }\right)
}\leq \left\Vert f\right\Vert _{W\left( L^{p),\theta },L^{q),\theta }\right)
}+\left\Vert g\right\Vert _{W\left( L^{p),\theta },L^{q),\theta }\right) }$

$5.$ if $\left\vert g\right\vert \leq \left\vert f\right\vert $ a.e. in $%
\Omega ,$ then $\left\Vert g\right\Vert _{W\left( L^{p),\theta
},L^{q),\theta }\right) }\leq \left\Vert f\right\Vert _{W\left( L^{p),\theta
},L^{q),\theta }\right) }$

$6.$ if $0\leq f_{n}\uparrow f$ a.e. in $\Omega ,$ then $\left\Vert
f_{n}\right\Vert _{W\left( L^{p),\theta },L^{q),\theta }\right) }\uparrow
\left\Vert f\right\Vert _{W\left( L^{p),\theta },L^{q),\theta }\right) }$

$7.\left\Vert \chi _{E}\right\Vert _{W\left( L^{p),\theta },L^{q),\theta
}\right) }<+\infty $

$8.\dint\limits_{E}\left\vert f\right\vert dx\leq C\left( p,\theta ,E\right)
\left\Vert f\right\Vert _{W\left( L^{p),\theta },L^{q),\theta }\right) }$

for some $0<C<\infty .$

\begin{proof}
The first three properties follow from the definition of the norm $%
\left\Vert .\right\Vert _{W\left( L^{p),\theta },L^{q),\theta }\right) }.$

Proof of property $4.$ Let $f,g\in W\left( L^{p),\theta },L^{q),\theta
}\right) .$ Then%
\begin{eqnarray*}
\left\Vert f+g\right\Vert _{W\left( L^{p),\theta },L^{q),\theta }\right) }
&=&\left\Vert \left\Vert \left( f+g\right) \chi _{Q+x}\right\Vert
_{p),\theta }\right\Vert _{q),\theta }=\left\Vert \left\Vert f\chi
_{Q+x}+g\chi _{Q+x}\right\Vert _{p),\theta }\right\Vert _{q),\theta } \\
&\leq &\left\Vert \left\Vert f\chi _{Q+x}\right\Vert _{p),\theta
}+\left\Vert g\chi _{Q+x}\right\Vert _{p),\theta }\right\Vert _{q),\theta
}=\left\Vert F_{f}^{p),\theta }\left( x\right) +F_{g}^{p),\theta }\left(
x\right) \right\Vert _{q),\theta } \\
&\leq &\left\Vert F_{f}^{p),\theta }\left( x\right) \right\Vert _{q),\theta
}+\left\Vert F_{g}^{p),\theta }\left( x\right) \right\Vert _{q),\theta
}=\left\Vert f\right\Vert _{W\left( L^{p),\theta },L^{q),\theta }\right)
}+\left\Vert g\right\Vert _{W\left( L^{p),\theta },L^{q),\theta }\right) }.
\end{eqnarray*}

Proof of property $5.$ Let $\left\vert g\right\vert \leq \left\vert
f\right\vert .$ Then $\left\vert g\right\vert \chi _{Q+x}\leq \left\vert
f\right\vert \chi _{Q+x}$ a.e. in $\Omega .$ Since $L^{p),\theta }\left(
\Omega \right) $ is a BF-space on $\Omega ,$ we have 
\begin{equation*}
F_{g}^{p),\theta }\left( x\right) =\left\Vert g\chi _{Q+x}\right\Vert
_{p),\theta }\leq \left\Vert f\chi _{Q+x}\right\Vert _{p),\theta
}=F_{f}^{p),\theta }\left( x\right) .
\end{equation*}%
Thus we obtain $\left\Vert g\right\Vert _{W\left( L^{p),\theta
},L^{q),\theta }\right) }\leq \left\Vert f\right\Vert _{W\left( L^{p),\theta
},L^{q),\theta }\right) }.$

Proof of property $6.$ Since $0\leq f_{n}\uparrow f$ a.e. in $\Omega ,$ then 
$f_{n}\chi _{Q+x}\uparrow f\chi _{Q+x}$ a.e. in $\Omega .$ Then 
\begin{equation*}
F_{f_{n}}^{p),\theta }\left( x\right) =\left\Vert f_{n}\chi
_{Q+x}\right\Vert _{p),\theta }\uparrow \left\Vert f\chi _{Q+x}\right\Vert
_{p),\theta }=F_{f}^{p),\theta }\left( x\right)
\end{equation*}%
by proposition 2.1, in $\left[ 1\right] .$ One more applaying this
proposition we have 
\begin{equation*}
\left\Vert f_{n}\right\Vert _{W\left( L^{p),\theta },L^{q),\theta }\right)
}=\left\Vert \left\Vert f_{n}\chi _{Q+x}\right\Vert _{p),\theta }\right\Vert
_{q),\theta }\uparrow \left\Vert \left\Vert f\chi _{Q+x}\right\Vert
_{p),\theta }\right\Vert _{q),\theta }=\left\Vert f\right\Vert _{W\left(
L^{p),\theta },L^{q),\theta }\right) }.
\end{equation*}

Proof of property $7.$ Since $\varepsilon ^{\frac{\theta }{p-\varepsilon }}$
is increasing in $\left[ 0,p-1\right] ,$ and $\mu \left( \Omega \right) $ is
finite, then 
\begin{eqnarray}
\left\Vert \chi _{E}\chi _{Q+x}\right\Vert _{W\left( L^{p),\theta
},L^{q),\theta }\right) } &=&\left\Vert \chi _{E\cap Q+x}\right\Vert
_{W\left( L^{p),\theta },L^{q),\theta }\right) }=\left\Vert \left\Vert \chi
_{E\cap Q+x}\right\Vert _{p),\theta }\right\Vert _{q),\theta }  \TCItag{1} \\
&=&\left\Vert \sup_{0<\varepsilon \leq p-1}\varepsilon ^{\frac{\theta }{%
p-\varepsilon }}\left\Vert \chi _{E\cap Q+x}\right\Vert _{p-\varepsilon
}\right\Vert _{q),\theta }  \notag \\
&=&\left\Vert \sup_{0<\varepsilon \leq p-1}\varepsilon ^{\frac{\theta }{%
p-\varepsilon }}\left\{ \int_{\Omega }\left\vert \chi _{E\cap
Q+x}\right\vert ^{\frac{1}{p-\varepsilon }}d\mu \right\} ^{\frac{1}{%
p-\varepsilon }}\right\Vert _{q),\theta }  \notag \\
&\leq &\left\Vert \sup_{0<\varepsilon \leq p-1}\varepsilon ^{\frac{\theta }{%
p-\varepsilon }}\left[ \mu \left( \Omega \right) \right] ^{\frac{1}{%
p-\varepsilon }}\right\Vert _{q),\theta }.  \notag
\end{eqnarray}

If $\mu \left( \Omega \right) <1,$ then $\mu \left( \Omega \right) ^{\frac{1%
}{p-\varepsilon }}<\mu \left( \Omega \right) ^{\frac{1}{p}}$ for all $%
0<\varepsilon \leq p-1.$ Since $\varepsilon ^{\frac{\theta }{p-\varepsilon }%
} $ is increasing, by $\left( 1\right) $ we have 
\begin{eqnarray*}
\left\Vert \chi _{E}\chi _{Q+x}\right\Vert _{W\left( L^{p),\theta
},L^{q),\theta }\right) } &\leq &\left\Vert \sup_{0<\varepsilon \leq
p-1}\varepsilon ^{\frac{\theta }{p-\varepsilon }}\left[ \mu \left( \Omega
\right) \right] ^{\frac{1}{p-\varepsilon }}\right\Vert _{q),\theta } \\
&\leq &\left\Vert \left( p-1\right) ^{\theta }\left[ \mu \left( \Omega
\right) \right] ^{\frac{1}{p}}\right\Vert _{q),\theta } \\
&=&\sup_{0<\eta \leq q-1}\eta ^{\frac{\theta }{q-\eta }}\left\Vert \left(
p-1\right) ^{\theta }\left[ \mu \left( \Omega \right) \right] ^{\frac{1}{p}%
}\right\Vert _{q-\eta } \\
&=&\left( p-1\right) ^{\theta }\left( q-1\right) ^{\theta }\left[ \mu \left(
\Omega \right) \right] ^{\frac{1}{p}}\left[ \mu \left( \Omega \right) \right]
^{\frac{1}{q}} \\
&=&\left( p-1\right) ^{\theta }\left( q-1\right) ^{\theta }\left[ \mu \left(
\Omega \right) \right] ^{\frac{1}{p}+\frac{1}{q}}<\infty .
\end{eqnarray*}%
If $\mu \left( \Omega \right) >1,$ then $\mu \left( \Omega \right) ^{\frac{1%
}{p-\varepsilon }}>\mu \left( \Omega \right) ^{\frac{1}{p}}$ for all $%
0<\varepsilon \leq p-1.$ Again from $\left( 1\right) ,$ we have 
\begin{eqnarray*}
\left\Vert \chi _{E}\chi _{Q+x}\right\Vert _{W\left( L^{p),\theta
},L^{q),\theta }\right) } &\leq &\left\Vert \sup_{0<\varepsilon \leq
p-1}\varepsilon ^{\frac{\theta }{p-\varepsilon }}\left[ \mu \left( \Omega
\right) \right] ^{\frac{1}{p-\varepsilon }}\right\Vert _{q),\theta } \\
&=&\left\Vert \left( p-1\right) ^{\theta }\left[ \mu \left( \Omega \right) %
\right] \right\Vert _{q),\theta }=\sup_{0<\eta \leq q-1}\eta ^{\frac{\theta 
}{q-\eta }}\left\Vert \left( p-1\right) ^{\theta }\mu \left( \Omega \right)
\right\Vert _{q-\eta } \\
&\leq &\left( p-1\right) ^{\theta }\left( q-1\right) ^{\theta }\left[ \mu
\left( \Omega \right) \right] ^{2}<\infty .
\end{eqnarray*}

Proof of property $8.$ \ Fix $0<\varepsilon \leq p-1.$ By the H\"{o}lder's
inequality for Wiener's amalgam space we have 
\begin{equation*}
\dint\limits_{E}\left\vert f\right\vert dx=\dint\limits_{\Omega }\left\vert
f\chi _{E}\right\vert dx\leq \left\Vert f\right\Vert _{W\left(
L^{p-\varepsilon },L^{q-\varepsilon }\right) }\left\Vert \chi
_{E}\right\Vert _{W\left( L^{\left( p-\varepsilon \right) ^{\prime
}},L^{\left( q-\varepsilon \right) ^{\prime }}\right) }=C\left\Vert
f\right\Vert _{W\left( L^{p-\varepsilon },L^{q-\varepsilon }\right) }
\end{equation*}%
where $\frac{1}{p-\varepsilon }+\frac{1}{\left( p-\varepsilon \right)
^{^{\prime }}}=\frac{1}{q}+\frac{1}{\left( q-\varepsilon \right) ^{^{\prime
}}}=1,$ and $C=C\left( p,\theta ,E\right) =$\ $\left\Vert \chi
_{E}\right\Vert _{W\left( L^{\left( p-\varepsilon \right) ^{\prime
}},L^{\left( q-\varepsilon \right) ^{\prime }}\right) }.$ Thus we obtain 
\begin{equation*}
\dint\limits_{E}\left\vert f\right\vert dx\leq C\sup_{0<\varepsilon \leq
p-1}\varepsilon ^{\frac{\theta }{p-\varepsilon }}\left\Vert f\right\Vert
_{W\left( L^{p-\varepsilon },L^{q-\varepsilon }\right) }=C\left\Vert
f\right\Vert _{W\left( L^{p)},L^{q)}\right) }.
\end{equation*}
\end{proof}

\begin{theorem}
The grand Wiener amalgam space $W\left( L^{p),\theta },L^{q),\theta }\right) 
$ is a Banach space.
\end{theorem}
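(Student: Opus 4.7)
The plan is to prove completeness by the standard subsequence-extraction argument for Banach function spaces, drawing on the Fatou property (item $6$) and the triangle inequality (item $4$) already established in the previous theorem, together with the known fact that the generalized grand Lebesgue spaces $L^{p),\theta }\left( \mu \right) $ and $L^{q),\theta }\left( \mu \right) $ are themselves Banach (see $\left[ 1\right] ,\left[ 13\right] $). Since items $1$--$5$ already deliver the norm axioms for $\left\Vert \cdot \right\Vert _{W\left( L^{p),\theta },L^{q),\theta }\right) }$ (hereafter abbreviated $\left\Vert \cdot \right\Vert _{W}$), only completeness remains to be verified.

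First I would pick a Cauchy sequence $\left( f_{n}\right) $ in $W\left( L^{p),\theta },L^{q),\theta }\right) $ and extract a subsequence with $\left\Vert f_{n_{k+1}}-f_{n_{k}}\right\Vert _{W}<2^{-k}$. Setting $g_{m}=\left\vert f_{n_{1}}\right\vert +\sum_{k=1}^{m-1}\left\vert f_{n_{k+1}}-f_{n_{k}}\right\vert $, items $4$ and $5$ bound $\left\Vert g_{m}\right\Vert _{W}\leq \left\Vert f_{n_{1}}\right\Vert _{W}+1$ uniformly in $m$. Because $0\leq g_{m}\uparrow g:=\lim_{m}g_{m}$ a.e., item $6$ produces $\left\Vert g\right\Vert _{W}<\infty $, so $g$ is finite a.e. in $\Omega $. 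Hence the telescoping series $f:=f_{n_{1}}+\sum_{k}\left( f_{n_{k+1}}-f_{n_{k}}\right) $ converges absolutely a.e., and $f_{n_{k}}\to f$ pointwise a.e.

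The remaining step is to upgrade this to norm convergence. Fixing $k$ and setting $G_{m}:=\inf_{j\geq m}\left\vert f_{n_{k}}-f_{n_{j}}\right\vert $, one has $G_{m}\uparrow \left\vert f_{n_{k}}-f\right\vert $ a.e., so item $6$ gives $\left\Vert f_{n_{k}}-f\right\Vert _{W}=\lim_{m}\left\Vert G_{m}\right\Vert _{W}$, while item $5$ yields $\left\Vert G_{m}\right\Vert _{W}\leq \inf_{j\geq m}\left\Vert f_{n_{k}}-f_{n_{j}}\right\Vert _{W}$. Combining these estimates produces $\left\Vert f_{n_{k}}-f\right\Vert _{W}\leq \liminf_{j}\left\Vert f_{n_{k}}-f_{n_{j}}\right\Vert _{W}$, which tends to $0$ as $k\to \infty $ by the Cauchy hypothesis. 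In particular $f\in W\left( L^{p),\theta },L^{q),\theta }\right) $; since a Cauchy sequence with a convergent subsequence converges, the entire sequence $\left( f_{n}\right) $ tends to $f$ in $\left\Vert \cdot \right\Vert _{W}$.

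The main obstacle is precisely the Fatou manipulation in the last paragraph: item $6$ is stated only for nonnegative monotone increasing sequences, whereas the target estimate is a $\liminf $ over norms of differences. The inf-truncation $G_{m}$ resolves this, but its legitimacy presumes that the $W$-norm inherits the Fatou property not only in the trivial monotone case but also in the derived $\liminf $ form, and moreover that the two nested suprema defining $\left\Vert \cdot \right\Vert _{W}$ commute with a.e. monotone limits. These facts propagate from the Fatou property already recorded for the generalized grand Lebesgue spaces (Proposition $2.1$ in $\left[ 1\right] $), applied first to the inner norm $\left\Vert \cdot \right\Vert _{p),\theta }$ on each slice $Q+x$ and then to the outer norm $\left\Vert \cdot \right\Vert _{q),\theta }$ in the variable $x$; the double-supremum bookkeeping is the technical heart of the argument.
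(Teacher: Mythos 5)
Your proof is correct, but it takes a genuinely different route from the paper's. The paper reduces completeness to the criterion that every absolutely summable series converges in norm, and then defers the verification entirely to the classical completeness proof for $W\left( L^{p},L^{q}\right) $ (Proposition 11.3.2 in $\left[ 15\right] $ and $\left[ 9\right] $), carried over ``mutatis mutandis''; no details specific to the grand setting are given. You instead run the standard Bennett--Sharpley completeness argument for Banach function spaces directly on $\Vert \cdot \Vert _{W\left( L^{p),\theta },L^{q),\theta }\right) }$, using only the triangle inequality (item $4$), lattice monotonicity (item $5$) and the monotone-convergence property (item $6$) already established in Theorem 2. This buys a self-contained proof that never needs to reopen the slice-by-slice structure of the amalgam norm, whereas the paper's route leans on an external reference whose hypotheses (completeness and solidity of the component spaces) must still be checked for $L^{p),\theta }$ and $L^{q),\theta }$. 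The one point you should make explicit is that item $6$ as stated presupposes that the limiting function already belongs to the space, while your argument applies it to $g=\lim_{m}g_{m}$ and to $\left\vert f_{n_{k}}-f\right\vert $ before membership is known; this is repaired by noting that the defining double supremum extends to all nonnegative measurable functions with values in $\left[ 0,\infty \right] $ and that the monotone-convergence property of Proposition 2.1 in $\left[ 1\right] $ (the same fact the paper invokes to prove item $6$) holds for this extended functional, applied first to the inner norm on each slice $Q+x$ and then to the outer norm in the variable $x$ --- precisely the bookkeeping you flag in your final paragraph. With that remark inserted, the argument is complete.
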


\begin{proof}
To show that $W\left( L^{p),\theta },L^{q),\theta }\right) $ is a Banach
space it is suffices to show that if $\left( f_{n}\right) _{n\in \mathbb{N}}$
is a sequence in $W\left( L^{p),\theta },L^{q),\theta }\right) $ with 
\begin{equation*}
\dsum\limits_{n\in \mathbb{N}}\left\Vert f_{n}\right\Vert _{W\left(
L^{p),\theta },L^{q),\theta }\right) }<\infty ,
\end{equation*}%
then $\dsum\limits_{n\in \mathbb{N}}f_{n}$ converges to an element of $%
W\left( L^{p),\theta },L^{q),\theta }\right) .$ The proof of this is mutadis
and mutandis \ same as in the proof of completness of Wiener amalgam space $%
W\left( L^{p},L^{q}\right) ,$ (see Proposition 11.3.2, in $\left[ 15\right]
, $ and $\left[ 9\right] ).$
\end{proof}

\section{\textbf{Inclusions and consequences}}

\begin{proposition}
Let $1<p,q<\infty $ . Then for an arbitrary $\varepsilon $ and $\eta ,$ $%
0<\varepsilon \leq p-1,$ $0<\eta \leq q-1,$ we have 
\begin{equation*}
W\left( L^{p},L^{q}\right) \subset W\left( L^{p),\theta },L^{q),\theta
}\right) \subset W\left( L^{p-\varepsilon },L^{q-\eta }\right) .
\end{equation*}%
If $q\leq p,$ then the inclusion $W\left( L^{p},L^{q}\right) \subset W\left(
L^{p),\theta },L^{q),\theta }\right) $ is strict.
\end{proposition}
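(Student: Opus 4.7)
The plan is to reduce both inclusions to the one-variable embeddings $L^{p}(\mu) \hookrightarrow L^{p),\theta}(\mu) \hookrightarrow L^{p-\varepsilon}(\mu)$ recalled in Section~1 (and their analogues at exponent $q$), applied successively at the local (inner) and global (outer) level of the amalgam norm. The key structural tool for passing from a pointwise domination between control functions to a norm inequality between amalgam norms is the solidity already established as property~5 in the preceding theorem.

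For the first inclusion, I would fix $f \in W(L^{p},L^{q})$ and use $L^{p} \hookrightarrow L^{p),\theta}$---whose embedding constant is finite because $\mu(\Omega)$ is finite, via a direct H\"older estimate bounding $\|f\chi_{Q+x}\|_{p-\varepsilon}$ by $\|f\chi_{Q+x}\|_{p}$ uniformly in $\varepsilon \in (0,p-1]$---to obtain the pointwise domination $F_{f}^{p),\theta}(x) \leq C_{1} F_{f}^{p}(x)$. Applying solidity, and then the analogous global embedding $L^{q} \hookrightarrow L^{q),\theta}$, yields
\[
\|f\|_{W(L^{p),\theta},L^{q),\theta})} \leq C_{1} C_{2}\, \|f\|_{W(L^{p},L^{q})}.
\]
The second inclusion proceeds symmetrically using the reverse embeddings: for the fixed $\varepsilon$ and $\eta$ the definition of the grand norms gives, essentially tautologically, $\|g\|_{p-\varepsilon} \leq \varepsilon^{-\theta/(p-\varepsilon)} \|g\|_{p),\theta}$ and the analogue at exponent $q-\eta$, and iterating these at the two levels of the amalgam with solidity produces a continuous inclusion into $W(L^{p-\varepsilon},L^{q-\eta})$.

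For the strict part under $q \leq p$, I would exhibit a compactly supported witness $f_{0}$ lying in $W(L^{p),\theta},L^{q),\theta})$ but not in $W(L^{p},L^{q})$. A natural candidate is a smooth bump at an interior point $x_{0}$ of $\Omega$ multiplying a borderline power singularity of the form $|x-x_{0}|^{-n/p}$ on a Euclidean patch inside $\Omega$: the classical grand Lebesgue computation shows that such a profile lies in $L^{p),\theta}$ locally while failing to be in $L^{p}$, so $F_{f_{0}}^{p),\theta}$ is bounded with compact support and therefore automatically in $L^{q),\theta}$, whereas $F_{f_{0}}^{p}(x)=+\infty$ on the set of positive measure $x_{0}-Q$. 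The main obstacle will be calibrating the singularity so that the factor $\varepsilon^{\theta/(p-\varepsilon)}$ really tames the grand norm while the ordinary $L^{p}$ norm diverges, and verifying that the hypothesis $q\leq p$ is what guarantees that the local grand-Lebesgue slack is the binding obstruction and is not absorbed by a more demanding global integrability requirement coming from a larger $q$.
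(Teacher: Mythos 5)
Your treatment of the two inclusions is essentially the paper's: both arguments reduce to the one--variable embeddings $L^{p}\hookrightarrow L^{p),\theta }\hookrightarrow L^{p-\varepsilon }$ applied first to the control function and then at the outer level, with solidity carrying the pointwise domination $F_{f}^{p),\theta }\leq D_{1}F_{f}^{p}$ (resp.\ $F_{f}^{p-\varepsilon }\leq \varepsilon ^{-\theta /(p-\varepsilon )}F_{f}^{p),\theta }$) over to the amalgam norms; your version is in fact slightly more careful, since the paper writes $\Vert f\chi _{Q+x}\Vert _{p-\varepsilon }\leq \Vert f\chi _{Q+x}\Vert _{p),\theta }$ without the factor $\varepsilon ^{-\theta /(p-\varepsilon )}$ that you correctly retain. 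For the strictness the witness is the same borderline singularity --- the paper takes $\Omega =(0,1)$ and $f(t)=t^{-1/p}$ --- but the concluding steps differ: the paper shows $f\notin L^{p}(0,1)$ and then invokes $W(L^{p},L^{q})(0,1)\subset L^{p}(0,1)$, which is exactly where the hypothesis $q\leq p$ enters, whereas you observe that $F_{f}^{p}(x)=+\infty $ on a set of positive measure, which rules out membership in $W(L^{p},L^{q})$ directly and, as you half suspect, makes $q\leq p$ unnecessary for your version. The one point you flag but do not resolve --- calibrating the singularity so that $\varepsilon ^{\theta /(p-\varepsilon )}$ really tames the grand norm --- is a genuine issue: the relevant supremum $\sup_{0<\varepsilon \leq p-1}\varepsilon ^{(\theta -1)/(p-\varepsilon )}p^{1/(p-\varepsilon )}$ is finite only for $\theta \geq 1$, and the paper silently introduces this hypothesis mid--proof (``Since $\theta \geq 1$'') even though it is absent from the statement of the proposition; so to complete your plan you must either assume $\theta \geq 1$ or choose a differently calibrated singularity when $0<\theta <1$.
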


\begin{proof}
By the definition of generalized grand Lebesgue space we have 
\begin{equation}
F_{f}^{p-\varepsilon }\left( x\right) =\left\Vert f.\chi _{Q+x}\right\Vert
_{p-\varepsilon }\leq \left\Vert f.\chi _{Q+x}\right\Vert _{p),\theta
}=F_{f}^{p),\theta }\left( x\right) .  \tag{2}
\end{equation}%
Then from $\left( 2\right) ,$ 
\begin{align*}
\left\Vert f\right\Vert _{W\left( L^{p-\varepsilon },L^{q-\eta }\right) }&
=\left\Vert \left\Vert f.\chi _{Q+x}\right\Vert _{p-\varepsilon }\right\Vert
_{q-\eta }\leq \left\Vert F_{f}^{p),\theta }\left( x\right) \right\Vert
_{q-\eta } \\
& \leq \sup_{0<\eta \leq q-1}\varepsilon ^{\frac{\theta }{q-\eta }%
}\left\Vert F_{f}^{p),\theta }\left( x\right) \right\Vert _{q),\theta
}=\left\Vert f\right\Vert _{W\left( L^{p),\theta },L^{q,\theta )}\right) }.
\end{align*}%
This implies $W\left( L^{p),\theta },L^{q),\theta }\right) \subset W\left(
L^{p-\varepsilon },L^{q-\eta }\right) .$

\qquad We now want to show that $W\left( L^{p},L^{q}\right) \subset W\left(
L^{p),\theta },L^{q),\theta }\right) .$ It is known that $L^{p}\subset
L^{p),\theta }$ and $L^{q}\subset L^{q),\theta }.$ Then there exists $%
D_{1}>0 $ and $D_{2}>0$ such that 
\begin{equation*}
\left\Vert g\right\Vert _{p),\theta }\leq D_{1}\left\Vert g\right\Vert _{p},
\end{equation*}%
\begin{equation}
\left\Vert h\right\Vert _{q),\theta }\leq D_{2}\left\Vert h\right\Vert _{q}.
\tag{3}
\end{equation}%
Then by $\left( 2\right) $ and $\left( 3\right) $%
\begin{equation}
F_{f}^{p),\theta }\left( x\right) =\left\Vert f.\chi _{Q+x}\right\Vert
_{p),\theta }\leq D_{1}\left\Vert f.\chi _{Q+x}\right\Vert
_{p}=D_{1}F_{f}^{p}\left( x\right) .  \tag{4}
\end{equation}%
Hence by $\left( 3\right) $ and $\left( 4\right) ,$%
\begin{equation*}
\left\Vert f\right\Vert _{W\left( L^{p),\theta },L^{q),\theta }\right)
}=\left\Vert F_{f}^{p),\theta }\right\Vert _{q),\theta }\leq \left\Vert
D_{1}F_{f}^{p}\right\Vert _{q),\theta }\leq D_{1}D_{2}\left\Vert
F_{f}^{p}\right\Vert _{q}=\left\Vert f\right\Vert _{W\left(
L^{p},L^{q}\right) }.
\end{equation*}%
Thus $W\left( L^{p},L^{q}\right) \subseteq W\left( L^{p),\theta
},L^{q),\theta }\right) .$

\qquad To see that the inclusion $W\left( L^{p},L^{q}\right) \subseteq
W\left( L^{p),\theta },L^{q),\theta }\right) $ is strict, take $\Omega
=\left( 0,1\right) $ and $f\left( x\right) =x^{-\frac{1}{p}},$ for $p>1.$
Then 
\begin{eqnarray*}
F_{f}^{p)}\left( x\right) &=&\left\Vert t^{-\frac{1}{p}}\chi
_{Q+x}\right\Vert _{p),\theta }\leq \left\Vert t^{-\frac{1}{p}}\right\Vert
_{p),\theta }=\sup_{0<\varepsilon \leq p-1}\varepsilon ^{\frac{\theta }{%
p-\varepsilon }}\left( \left\Vert t^{-\frac{1}{p}}\right\Vert
_{p-\varepsilon }\right) \\
&=&\sup_{0<\varepsilon \leq p-1}\varepsilon ^{\frac{\theta }{p-\varepsilon }%
}\left( \int_{0}^{1}t^{-\frac{1}{p}\left( p-\varepsilon \right) }dt\right) ^{%
\frac{1}{p-\varepsilon }}=\sup_{0<\varepsilon \leq p-1}\varepsilon ^{\frac{%
\theta }{p-\varepsilon }}\left( \lim_{a\rightarrow 0}\left( \int_{a}^{1}t^{-%
\frac{1}{p}\left( p-\varepsilon \right) }dt\right) ^{\frac{1}{p-\varepsilon }%
}\right) \\
&=&\sup_{0<\varepsilon \leq p-1}\varepsilon ^{\frac{\theta }{p-\varepsilon }}%
\left[ \lim_{a\rightarrow 0}\left( \frac{p}{\varepsilon }t^{\frac{%
\varepsilon }{p}}\mid _{t=a}^{t=1}\right) \right] ^{\frac{1}{p-\varepsilon }%
}=\sup_{0<\varepsilon \leq p-1}\varepsilon ^{\frac{\theta }{p-\varepsilon }}%
\left[ \lim_{a\rightarrow 0}\left( \frac{p}{\varepsilon }\left( 1-a^{\frac{%
\varepsilon }{p}}\right) \right) \right] ^{\frac{1}{p-\varepsilon }} \\
&=&\sup_{0<\varepsilon \leq p-1}\varepsilon ^{\frac{\theta }{p-\varepsilon }%
}\left( \frac{p}{\varepsilon }\right) ^{\frac{1}{p-\varepsilon }%
}=\sup_{0<\varepsilon \leq p-1}\varepsilon ^{\frac{\theta -1}{p-\varepsilon }%
}.p^{\frac{1}{p-\varepsilon }}.
\end{eqnarray*}%
Since $\theta \geq 1,$ 
\begin{equation}
F_{f}^{p)}\left( x\right) =\sup_{0<\varepsilon \leq p-1}\varepsilon ^{\frac{%
\theta -1}{p-\varepsilon }}.p^{\frac{1}{p-\varepsilon }}\leq \left(
p-1\right) ^{\theta -1}p<p^{\theta }.  \tag{5}
\end{equation}%
Thus from $\left( 5\right) ,$ 
\begin{eqnarray*}
\left\Vert f\right\Vert _{W\left( L^{p)},L^{q)}\right) } &=&\left\Vert
F_{f}^{p)}\right\Vert _{q),\theta }<\left\Vert p^{\theta }\right\Vert
_{q),\theta } \\
&=&\sup_{0<\eta \leq q-1}\eta ^{\frac{\theta }{q-\eta }}\left\{
\int_{0}^{1}\left( p^{\theta }\right) ^{q-\eta }dx\right\} ^{\frac{1}{q-\eta 
}}=\sup_{0<\eta \leq q-1}\varepsilon ^{\frac{\theta }{q-\eta }}p^{\theta } \\
&=&\left( q-1\right) ^{\theta }p^{\theta }<\infty .
\end{eqnarray*}%
Then $f\left( x\right) =x^{-\frac{1}{p}}\in W\left( L^{p),\theta
},L^{q),\theta }\right) \left( 0,1\right) .$ In the other hand it is easy to
show that $f\left( t\right) =t^{-\frac{1}{p}}\notin L^{p}\left( 0,1\right) .$
Since $W\left( L^{p},L^{q}\right) \left( 0,1\right) \subset L^{p}\left(
0,1\right) $ for $q\leq p,$ then $f\left( t\right) =t^{-\frac{1}{t}}\notin
W\left( L^{p},L^{q}\right) \left( 0,1\right) .$ So we have $f\left( t\right)
=t^{-\frac{1}{p}}\in W\left( L^{p),\theta },L^{q),\theta }\right) \backslash
W\left( L^{p},L^{q}\right) .$ Thus if $q\leq p,$ the inclusion $W\left(
L^{p},L^{q}\right) \subset W\left( L^{p),\theta },L^{q),\theta }\right) $ is
strict.
\end{proof}

\begin{proposition}
If $1<p_{2}\leq p_{1}<\infty $ and $1<q<\infty $, then $W\left(
L^{p_{1}),\theta },L^{q),\theta }\right) \subset W\left( L^{p_{2}),\theta
},L^{q),\theta }\right) .$
\end{proposition}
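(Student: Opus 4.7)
The plan is to reduce the claim to the pointwise comparison
\begin{equation*}
F_f^{p_2),\theta}(x) \leq F_f^{p_1),\theta}(x) \quad \text{for every } x \in \Omega,
\end{equation*}
after which the monotonicity of the outer norm $\|\cdot\|_{q),\theta}$ (immediate from its definition as a supremum) will yield
\begin{equation*}
\|f\|_{W(L^{p_2),\theta}, L^{q),\theta})} = \|F_f^{p_2),\theta}\|_{q),\theta} \leq \|F_f^{p_1),\theta}\|_{q),\theta} = \|f\|_{W(L^{p_1),\theta}, L^{q),\theta})},
\end{equation*}
whence the desired inclusion follows.

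To establish the pointwise comparison, I would fix $x \in \Omega$ and set $g := f \chi_{Q+x}$, so that it suffices to prove the abstract inequality $\|g\|_{p_2),\theta} \leq \|g\|_{p_1),\theta}$. My approach is a linear change of parameter in the defining supremum: for each admissible $\varepsilon \in (0, p_2-1]$, set
\begin{equation*}
\varepsilon' := \varepsilon + (p_1 - p_2).
\end{equation*}
Because $p_2 \leq p_1$, one has $0 < \varepsilon \leq \varepsilon' \leq p_1 - 1$, so $\varepsilon'$ is an admissible parameter for $\|\cdot\|_{p_1),\theta}$. The crucial algebraic identity $p_1 - \varepsilon' = p_2 - \varepsilon$ gives $\|g\|_{p_2-\varepsilon} = \|g\|_{p_1-\varepsilon'}$, and combining this with $\varepsilon \leq \varepsilon'$ and $\theta \geq 0$ yields
\begin{equation*}
\varepsilon^{\theta/(p_2-\varepsilon)} \|g\|_{p_2-\varepsilon} \leq (\varepsilon')^{\theta/(p_1-\varepsilon')} \|g\|_{p_1-\varepsilon'} \leq \|g\|_{p_1),\theta}.
\end{equation*}
Passing to the supremum over $\varepsilon \in (0, p_2-1]$ on the left closes the argument and delivers the pointwise comparison above.

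The only real subtlety, and the step I expect to require the most care, is the verification that the shift $\varepsilon \mapsto \varepsilon + (p_1-p_2)$ lands in the admissible range $(0, p_1-1]$ for the right-hand supremum and that the weight $\varepsilon^{\theta/(p_2-\varepsilon)}$ is monotone under this shift; both facts rest precisely on the hypothesis $p_2 \leq p_1$ together with $\theta \geq 0$. Beyond this, the proof is just a pointwise estimate followed by monotonicity of $\|\cdot\|_{q),\theta}$, and no appeal to finiteness of $\mu$ or any further structure on $\Omega$ is needed.
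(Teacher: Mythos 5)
Your argument is correct, and it shares the paper's overall skeleton: reduce to an embedding of the local components, apply it to $g=f\chi_{Q+x}$ to compare the control functions pointwise, then use the solidity (monotonicity) of the outer norm $\left\Vert \cdot\right\Vert _{q),\theta }$. The difference is in how the key inner step is handled. The paper simply cites an inclusion theorem for generalized grand Lebesgue spaces (Theorem 3 of the author's reference on inclusions of these spaces), which gives $\left\Vert g\right\Vert _{p_{2}),\theta }\leq C\left\Vert g\right\Vert _{p_{1}),\theta }$ with an unspecified constant $C>0$; you instead prove the embedding from scratch via the reparametrization $\varepsilon ^{\prime }=\varepsilon +(p_{1}-p_{2})$. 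Your computation checks out: $p_{1}-\varepsilon ^{\prime }=p_{2}-\varepsilon $ makes the two Lebesgue norms literally identical and makes the exponents $\theta /(p_{2}-\varepsilon )$ and $\theta /(p_{1}-\varepsilon ^{\prime })$ coincide, so the weight comparison reduces to $\varepsilon ^{\alpha }\leq (\varepsilon ^{\prime })^{\alpha }$ with $\alpha \geq 0$, and the shifted parameter lands in $(p_{1}-p_{2},\,p_{1}-1]\subset (0,\,p_{1}-1]$, so every term of the $p_{2})$-supremum is dominated by a term of the $p_{1})$-supremum. What your route buys is a self-contained argument with the sharp constant $C=1$ (the inclusion map has norm at most one), and, as you observe, no appeal to the finiteness of $\mu $ at this step, in contrast to the H\"{o}lder-based proof one would use for the plain inclusion $L^{p_{1}}\subset L^{p_{2}}$. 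Nothing is missing from your proposal; it is, if anything, more informative than the proof in the paper.
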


\begin{proof}
Let $f\in W\left( L^{p_{1}),\theta },L^{q),\theta }\right) .$ Then $f\chi
_{Q+x}\in L^{p_{1}),\theta }$. Since $p_{2}\leq p_{1},$ by Theorem 3, in $%
\left[ 15\right] $ we have $L^{p_{1}),\theta }\subset L^{p_{2}),\theta },$
and 
\begin{equation*}
\left\Vert f\chi _{Q+x}\right\Vert _{^{p_{2}),\theta }}\leq C\left\Vert
f\chi _{Q+x}\right\Vert _{^{p_{1}),\theta }},\text{ }x\in \Omega
\end{equation*}%
for some $C>0.$ Thus by the solidness of $L^{q),\theta },$ 
\begin{eqnarray*}
\left\Vert f\right\Vert _{W\left( L^{p_{2}),\theta },L^{q),\theta }\right) }
&=&\left\Vert \left\Vert f.\chi _{Q+x}\right\Vert _{^{p_{2}),\theta
}}\right\Vert _{\left\Vert f\right\Vert _{^{q),\theta }}} \\
&\leq &C\left\Vert \left\Vert f.\chi _{Q+x}\right\Vert _{^{p_{1}),\theta
}}\right\Vert _{q),\theta }=\left\Vert f\right\Vert _{W\left(
L^{p_{1}),\theta },L^{q),\theta }\right) },
\end{eqnarray*}%
and we have\ \ $W\left( L^{p_{1}),\theta },L^{q),\theta }\right) \subset
W\left( L^{p_{2}),\theta },L^{q),\theta }\right) .$
\end{proof}

\begin{proposition}
If $1<q_{2}\leq q_{1}<\infty $ and $1<p<\infty $, then $W\left( L^{p),\theta
},L^{q_{1}),\theta }\right) \subset W\left( L^{p),\theta },L^{q_{2}),\theta
)}\right) .$
\end{proposition}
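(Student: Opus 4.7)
The plan is to mirror the structure of the previous proposition, but with the roles reversed: here the inner exponent $p$ is fixed and it is the outer exponent that changes, so the control function $F_{f}^{p),\theta}(x)=\|f\cdot\chi_{Q+x}\|_{p),\theta}$ is literally the \emph{same} object in both spaces. Consequently the only thing to transfer is the inclusion of the outer generalized grand Lebesgue spaces, and then the grand Wiener amalgam inclusion drops out immediately by monotonicity of the norm.

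First I would take an arbitrary $f\in W(L^{p),\theta},L^{q_{1}),\theta})$, which by definition means that the function $x\mapsto F_{f}^{p),\theta}(x)$ lies in $L^{q_{1}),\theta}(\Omega)$. Next, because $(\Omega,\mathfrak{B},\mu)$ is a finite Borel measure space and $1<q_{2}\le q_{1}<\infty$, I would invoke the inclusion $L^{q_{1}),\theta}(\Omega)\subset L^{q_{2}),\theta}(\Omega)$ together with the corresponding norm estimate
\begin{equation*}
\|h\|_{q_{2}),\theta}\le C\,\|h\|_{q_{1}),\theta}, \qquad h\in L^{q_{1}),\theta}(\Omega),
\end{equation*}
for some $C=C(q_{1},q_{2},\theta,\mu(\Omega))>0$; this is exactly the same statement that was cited from Theorem 3 of [15] in the preceding proposition, applied now to the outer exponents.

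Applying this with $h=F_{f}^{p),\theta}$ then gives
\begin{equation*}
\|f\|_{W(L^{p),\theta},L^{q_{2}),\theta})}
=\bigl\|F_{f}^{p),\theta}\bigr\|_{q_{2}),\theta}
\le C\,\bigl\|F_{f}^{p),\theta}\bigr\|_{q_{1}),\theta}
=C\,\|f\|_{W(L^{p),\theta},L^{q_{1}),\theta})},
\end{equation*}
which establishes the claimed continuous inclusion. In essence there is no serious obstacle here beyond knowing the monotonicity $L^{q_{1}),\theta}\subset L^{q_{2}),\theta}$ for finite measure; the hardest bookkeeping point is simply to observe that because $p$ is not being perturbed, one does not need any control on how $F_{f}^{p),\theta}$ transforms, as happened in the previous proposition where the inner exponent changed.
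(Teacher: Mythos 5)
Your proposal is correct and follows essentially the same route as the paper: both fix the control function $F_{f}^{p),\theta}$, observe it is unchanged, and apply the inclusion $L^{q_{1}),\theta}\subset L^{q_{2}),\theta}$ with its norm estimate to the outer space. The only difference is cosmetic — your version states the resulting norm inequality more cleanly than the paper's display (6), which mislabels the norms as Wiener amalgam norms.
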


\begin{proof}
Let $f\in W\left( L^{p),\theta },L^{q_{1}),\theta }\right) .$ Then $%
F_{f}^{p)}\in L^{q_{1}),\theta }.$ Since $1<q_{2}\leq q_{1}<\infty ,$by
Theorem 3 in $\left[ 14\right] ,$ there exists $C>0$ such that 
\begin{equation}
\left\Vert F_{f}^{p),\theta }\right\Vert _{W\left( L^{p),\theta
},L^{q_{2}),\theta }\right) }\leq C\left\Vert F_{f}^{p),\theta }\right\Vert
_{W\left( L^{p),\theta },L^{q_{1}),\theta }\right) }<\infty  \tag{6}
\end{equation}%
for all $\theta >0.$ Then $F_{f}^{p),\theta }\in L^{q_{2}),\theta }.$ This
implies $f\in W\left( L^{p),\theta },L^{q_{2}),\theta }\right) .$ Thus we
have the inclusion 
\begin{equation*}
W\left( L^{p),\theta },L^{q_{1}),\theta }\right) \subset W\left(
L^{p),\theta },L^{q_{2}),\theta )}\right) .
\end{equation*}
\end{proof}

By using the Proposition 2 and proposition 3, we easily prove the following
corollary.

\begin{corollary}
If $1<p_{2}\leq p_{1}<\infty $ and $1<q_{2}\leq q_{1}<\infty $, then $%
W\left( L^{p_{1}),\theta },L^{q_{1}),\theta }\right) \subset W\left(
L^{p_{2}),\theta },L^{q_{2}),\theta }\right) .$
\end{corollary}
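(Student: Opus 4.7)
The plan is simply to chain Proposition 2 and Proposition 3 through the intermediate space with indices $(p_2,q_1)$. Explicitly, since $p_2\leq p_1$, applying Proposition 2 with the second index fixed at $q_1$ yields the continuous inclusion
\begin{equation*}
W\left( L^{p_{1}),\theta },L^{q_{1}),\theta }\right) \subset W\left( L^{p_{2}),\theta },L^{q_{1}),\theta }\right) ,
\end{equation*}
together with an estimate $\left\Vert f\right\Vert _{W\left( L^{p_{2}),\theta },L^{q_{1}),\theta }\right) }\leq C_{1}\left\Vert f\right\Vert _{W\left( L^{p_{1}),\theta },L^{q_{1}),\theta }\right) }$ for some $C_{1}>0$. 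Next, since $q_2\leq q_1$, applying Proposition 3 with the first index fixed at $p_2$ gives
\begin{equation*}
W\left( L^{p_{2}),\theta },L^{q_{1}),\theta }\right) \subset W\left( L^{p_{2}),\theta },L^{q_{2}),\theta }\right) ,
\end{equation*}
with an estimate $\left\Vert f\right\Vert _{W\left( L^{p_{2}),\theta },L^{q_{2}),\theta }\right) }\leq C_{2}\left\Vert f\right\Vert _{W\left( L^{p_{2}),\theta },L^{q_{1}),\theta }\right) }$ for some $C_{2}>0$.

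Composing the two inclusions yields $W\left( L^{p_{1}),\theta },L^{q_{1}),\theta }\right) \subset W\left( L^{p_{2}),\theta },L^{q_{2}),\theta }\right) $, and multiplying the norm bounds gives
\begin{equation*}
\left\Vert f\right\Vert _{W\left( L^{p_{2}),\theta },L^{q_{2}),\theta }\right) }\leq C_{1}C_{2}\left\Vert f\right\Vert _{W\left( L^{p_{1}),\theta },L^{q_{1}),\theta }\right) },
\end{equation*}
completing the proof. There is essentially no obstacle here, since transitivity of inclusions is automatic once the two intermediate steps are in place; the only thing to verify is that the constants from Propositions 2 and 3 combine multiplicatively, which they do since each step is a linear norm estimate.
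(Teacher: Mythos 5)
Your proof is correct and follows exactly the route the paper intends: the paper simply remarks that the corollary follows "by using Proposition 2 and Proposition 3," and your chaining through the intermediate space $W\left( L^{p_{2}),\theta },L^{q_{1}),\theta }\right) $ with multiplicative constants is the natural way to fill in that one-line argument. No issues.
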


\begin{proposition}
Let $1<p_{i},q_{i}<\infty ,\left( i=1,2,3\right) .$ If there exist constants 
$C_{1}>0,C_{1}>0,$ such that for all $u\in L^{p_{1}),\theta },v\in
L^{p_{2}),}$\ 
\begin{equation}
\left\Vert uv\right\Vert _{L^{p_{3}),\theta }}\leq C_{1}\left\Vert
u\right\Vert _{L^{p_{1}),\theta }}\left\Vert v\right\Vert _{L^{p_{2}),\theta
}}  \tag{7}
\end{equation}%
and for all $u\in L^{q_{1}),\theta },v\in L^{q_{2}),\theta }$\ \ 
\begin{equation}
\left\Vert uv\right\Vert _{L^{q_{3}),\theta }}\leq C_{2}\left\Vert
u\right\Vert _{L^{q_{1}),\theta }}\left\Vert v\right\Vert _{L^{q_{2}),\theta
}},  \tag{8}
\end{equation}%
then there exists a constant $C>0,$ such that for all $f\in W\left(
L^{p_{1}),\theta },L^{q_{1}),\theta }\right) $ and $g\in W\left(
L^{p_{2}),\theta },L^{q_{2}),\theta }\right) ,$ we have $fg\in W\left(
L^{p_{3}),\theta },L^{q_{3}),\theta }\right) $ and 
\begin{equation*}
\left\Vert f.g\right\Vert _{W\left( L^{p_{3}),\theta },L^{q_{3}),\theta
}\right) }\leq C_{1}C_{2}\left\Vert f\right\Vert _{W\left( L^{p_{1}),\theta
},L^{q_{1}),\theta }\right) }\left\Vert g\right\Vert _{W\left(
L^{p_{2}),\theta },L^{q_{2}),\theta }\right) }.
\end{equation*}
\end{proposition}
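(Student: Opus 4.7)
The plan is to prove this by a direct two-step application of the hypothesized Hölder-type inequalities, first in the ``inner'' local norm $\|\cdot\|_{p_i),\theta}$ and then in the ``outer'' norm $\|\cdot\|_{q_i),\theta}$. The crucial algebraic observation is that $\chi_{Q+x}^{2}=\chi_{Q+x}$, so that for every $x\in\Omega$,
\begin{equation*}
(fg)\chi_{Q+x}=(f\chi_{Q+x})(g\chi_{Q+x}).
\end{equation*}
This identity is what lets the two factors be separated inside the local norm.

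First I would fix $f\in W(L^{p_{1}),\theta},L^{q_{1}),\theta})$ and $g\in W(L^{p_{2}),\theta},L^{q_{2}),\theta})$ and write
\begin{equation*}
F_{fg}^{p_{3}),\theta}(x)=\|(fg)\chi_{Q+x}\|_{p_{3}),\theta}=\|(f\chi_{Q+x})(g\chi_{Q+x})\|_{p_{3}),\theta}.
\end{equation*}
Applying hypothesis (7) with $u=f\chi_{Q+x}$ and $v=g\chi_{Q+x}$ pointwise in $x$ gives
\begin{equation*}
F_{fg}^{p_{3}),\theta}(x)\leq C_{1}\,\|f\chi_{Q+x}\|_{p_{1}),\theta}\,\|g\chi_{Q+x}\|_{p_{2}),\theta}=C_{1}\,F_{f}^{p_{1}),\theta}(x)\,F_{g}^{p_{2}),\theta}(x).
\end{equation*}

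Next I would take the $\|\cdot\|_{q_{3}),\theta}$-norm of both sides. By property 5 of Theorem~3 (solidity of $\|\cdot\|_{q_{3}),\theta}$ as a BF-norm, which applies since the pointwise inequality is between non-negative functions) this yields
\begin{equation*}
\|F_{fg}^{p_{3}),\theta}\|_{q_{3}),\theta}\leq C_{1}\,\|F_{f}^{p_{1}),\theta}\cdot F_{g}^{p_{2}),\theta}\|_{q_{3}),\theta}.
\end{equation*}
Now hypothesis (8), applied to the pair of non-negative measurable functions $u=F_{f}^{p_{1}),\theta}$ and $v=F_{g}^{p_{2}),\theta}$, bounds the right-hand side by
\begin{equation*}
C_{1}C_{2}\,\|F_{f}^{p_{1}),\theta}\|_{q_{1}),\theta}\,\|F_{g}^{p_{2}),\theta}\|_{q_{2}),\theta}=C_{1}C_{2}\,\|f\|_{W(L^{p_{1}),\theta},L^{q_{1}),\theta})}\,\|g\|_{W(L^{p_{2}),\theta},L^{q_{2}),\theta})},
\end{equation*}
which is the claimed inequality (with $C=C_{1}C_{2}$). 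In particular the left side is finite, so $fg\in W(L^{p_{3}),\theta},L^{q_{3}),\theta})$.

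I do not expect a real obstacle here: the whole argument is the standard ``iterated Hölder'' principle for amalgam spaces, and both ingredients are given as hypotheses. The only point that requires a brief justification is the measurability (and $q_{i}),\theta$-admissibility) of the control functions $F_{f}^{p_{1}),\theta}$ and $F_{g}^{p_{2}),\theta}$, which is built into the definition of the grand Wiener amalgam space and therefore allows hypothesis (8) to be applied without modification.
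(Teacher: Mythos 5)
Your proposal is correct and follows essentially the same route as the paper: both exploit the identity $\chi _{Q+x}^{2}=\chi _{Q+x}$ to factor $(fg)\chi _{Q+x}=(f\chi _{Q+x})(g\chi _{Q+x})$, apply hypothesis $(7)$ pointwise in $x$ to the inner norm, and then apply hypothesis $(8)$ to the outer norm of the control functions. The only difference is that you explicitly invoke the solidity of $\left\Vert \cdot \right\Vert _{q_{3}),\theta }$ (property $5$ of Theorem 3) to pass the pointwise bound through the outer norm, a step the paper uses silently; this is a minor improvement in rigor, not a different argument.
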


\begin{proof}
Let \ $f\in W\left( L^{p_{1}),\theta },L^{q_{1}),\theta }\right) $ and $g\in
W\left( L^{p_{2}),\theta },L^{q_{2}),\theta }\right) .$ Since $\chi
_{Q+x}^{2}=\chi _{Q+x},$ from $\left( 7\right) $ and $\left( 8\right) ,$%
\begin{eqnarray*}
\left\Vert fg\right\Vert _{W\left( L^{p_{3}),\theta },L^{q_{3}),\theta
}\right) } &=&\left\Vert \left\Vert \left( fg\right) \chi _{Q+x}\right\Vert
_{L^{p_{3}),\theta }}\right\Vert _{L^{q_{3}),\theta }} \\
&=&\left\Vert \left\Vert \left( f\chi _{Q+x}\right) \left( f\chi
_{Q+x}\right) \right\Vert _{L^{p_{3}),\theta }}\right\Vert
_{L^{q_{3})},\theta } \\
&\leq &C_{1}\left\Vert \left\Vert f\chi _{Q+x}\right\Vert _{L^{p_{1}),\theta
}}\left\Vert f\chi _{Q+x}\right\Vert _{L^{p_{2}),\theta }}\right\Vert
_{L^{q_{3}),\theta }} \\
&\leq &C_{1}C_{2}\left\Vert \left\Vert f\chi _{Q+x}\right\Vert
_{L^{p_{1}),\theta }}\right\Vert _{L^{q_{1}),\theta }}\left\Vert \left\Vert
f\chi _{Q+x}\right\Vert _{L^{p_{2}),\theta }}\right\Vert _{L^{q_{2}),\theta
}} \\
&=&C_{1}C_{2}\left\Vert f\right\Vert _{W\left( L^{p_{1}),\theta
},L^{q_{1}),\theta }\right) }\left\Vert g\right\Vert _{W\left(
L^{p_{2}),\theta },L^{q_{2}),\theta }\right) }.
\end{eqnarray*}
\end{proof}

\begin{proposition}
Let $1<p\leq \infty .$ Then $W\left( L^{p),\theta },L^{p),\theta }\right)
\left( \Omega \right) =L^{p),\theta }\left( \Omega \right) .$
\end{proposition}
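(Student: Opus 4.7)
The plan is to establish the two continuous inclusions $L^{p),\theta}(\Omega)\subseteq W(L^{p),\theta},L^{p),\theta})$ and $W(L^{p),\theta},L^{p),\theta})\subseteq L^{p),\theta}(\Omega)$; together they give the stated equality with equivalent norms.

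For the first inclusion I would argue directly from solidness. Given $f\in L^{p),\theta}$, the solidness of $\|\cdot\|_{p),\theta}$ (property $5$ of Theorem $2$ applied on $\Omega$ itself) together with $|f\chi_{Q+x}|\leq|f|$ yields the pointwise bound
\[
F_{f}^{p),\theta}(x)=\|f\chi_{Q+x}\|_{p),\theta}\leq\|f\|_{p),\theta},
\]
so the control function is dominated by a constant. Because $\mu(\Omega)<\infty$, the constant function $1_{\Omega}$ lies in $L^{p),\theta}$ with norm $\|1_{\Omega}\|_{p),\theta}=\sup_{0<\eta\leq p-1}\eta^{\theta/(p-\eta)}\mu(\Omega)^{1/(p-\eta)}<\infty$, and one more appeal to solidness (for the outer norm) gives $\|f\|_{W(L^{p),\theta},L^{p),\theta})}\leq\|1_{\Omega}\|_{p),\theta}\|f\|_{p),\theta}$.

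For the reverse inclusion I would use the Fubini identity
\[
\int_{\Omega}\|f\chi_{Q+x}\|_{p-\varepsilon}^{p-\varepsilon}\,dx=|Q|\,\|f\|_{p-\varepsilon}^{p-\varepsilon},\qquad \varepsilon\in(0,p-1],
\]
which underlies the classical equality $W(L^{p},L^{p})=L^{p}$ (it reduces to the fact that $x\mapsto\chi_{Q+x}(t)$ integrates to $|Q|$ for each $t$). Inserting the pointwise lower bound $F_{f}^{p),\theta}(x)\geq\varepsilon^{\theta/(p-\varepsilon)}\|f\chi_{Q+x}\|_{p-\varepsilon}$ into it produces
\[
|Q|\,\varepsilon^{\theta}\,\|f\|_{p-\varepsilon}^{p-\varepsilon}\leq\|F_{f}^{p),\theta}\|_{p-\varepsilon}^{p-\varepsilon}.
\]
Taking $(p-\varepsilon)$-th roots, coupling the parameter $\varepsilon$ inside $F_{f}^{p),\theta}$ with the one in the outer grand norm, and passing to the supremum in $\varepsilon$ should then give an estimate of the form $\|f\|_{p),\theta}\leq C(p,\theta,|Q|)\,\|f\|_{W(L^{p),\theta},L^{p),\theta})}$.

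The delicate point is precisely this last step. Since $\|\cdot\|_{p),\theta}$ is itself a supremum in a parameter, that parameter appears both through the inner norm hidden in $F_{f}^{p),\theta}$ and through the outer norm; if one decouples them, an extra divergent factor $\varepsilon^{-\theta/(p-\varepsilon)}$ enters as $\varepsilon\to 0^{+}$ and the argument collapses. My plan is therefore to match the two parameters by picking a single $\varepsilon$ on both layers and then to exploit the continuity and two-sided boundedness of the weights $\varepsilon^{\theta/(p-\varepsilon)}$ and $|Q|^{1/(p-\varepsilon)}$ on the compact range $(0,p-1]$ to extract a uniform constant. This parameter-coupling is the only ingredient beyond the classical Fubini computation that is specific to the grand setting, and it is the step I expect to require the most care.
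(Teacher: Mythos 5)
Your first inclusion is fine, and in fact cleaner than what the paper does for that direction: the pointwise bound $F_{f}^{p),\theta}(x)\leq \Vert f\Vert _{p),\theta }$ from solidity together with $\Vert \chi _{\Omega }\Vert _{p),\theta }<\infty $ gives $L^{p),\theta }\hookrightarrow W\left( L^{p),\theta },L^{p),\theta }\right) $ with an explicit constant. The gap is in the reverse inclusion, and it sits exactly where you flag the delicate point. Your plan is to couple the inner and outer parameters and then invoke ``continuity and two-sided boundedness of the weights $\varepsilon ^{\theta /(p-\varepsilon )}$ and $|Q|^{1/(p-\varepsilon )}$ on the compact range $(0,p-1]$.'' But $(0,p-1]$ is not compact, and for $\theta >0$ the weight $\varepsilon ^{\theta /(p-\varepsilon )}$ tends to $0$ as $\varepsilon \rightarrow 0^{+}$: it has no positive lower bound, so no uniform constant can be extracted this way. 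If you carry the coupling out honestly, feeding $F_{f}^{p),\theta }(x)\geq \varepsilon ^{\theta /(p-\varepsilon )}\Vert f\chi _{Q+x}\Vert _{p-\varepsilon }$ into the Fubini identity and then applying the outer weight at the same $\varepsilon $ yields
\[
\left\Vert f\right\Vert _{W\left( L^{p),\theta },L^{p),\theta }\right) }\;\geq \;\sup_{0<\varepsilon \leq p-1}|Q|^{\frac{1}{p-\varepsilon }}\,\varepsilon ^{\frac{2\theta }{p-\varepsilon }}\left\Vert f\right\Vert _{p-\varepsilon },
\]
which is a lower bound by (a constant times) the $L^{p),2\theta }$ norm, not the $L^{p),\theta }$ norm. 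Since $L^{p),\theta }\subseteq L^{p),2\theta }$ with the inclusion generally strict, this proves only $W\left( L^{p),\theta },L^{p),\theta }\right) \hookrightarrow L^{p),2\theta }$, strictly weaker than the claim; removing the extra factor costs precisely the divergent $\varepsilon ^{-\theta /(p-\varepsilon )}$ you mention, and nothing in your outline controls it. (Separately, the identity $\int_{\Omega }\Vert f\chi _{Q+x}\Vert _{p-\varepsilon }^{p-\varepsilon }dx=|Q|\,\Vert f\Vert _{p-\varepsilon }^{p-\varepsilon }$ needs a translation-invariant group structure and ignores boundary effects on a finite-measure $\Omega $, though the paper is equally cavalier about this.)

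The paper's own argument for this direction is genuinely different: for a given $\eta >0$ it selects a single near-maximizer $\varepsilon _{0}$ of the supremum defining $\Vert f\Vert _{p),\theta }$, applies the classical identity $W\left( L^{p-\varepsilon _{0}},L^{p-\varepsilon _{0}}\right) =L^{p-\varepsilon _{0}}$ at that one exponent, and only then passes to the grand norms, paying a factor $\varepsilon _{0}^{-\theta /(p-\varepsilon _{0})}$ that is finite because $\varepsilon _{0}$ is fixed. One may question whether that produces a constant independent of $f$ (the paper's $\varepsilon _{0}$ depends on $f$ and $\eta $), but it is a different mechanism from yours, and your version as written does not close. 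To repair your route you would need either a uniform-in-$\varepsilon $ substitute for the missing lower bound on the weight, or to accept the weaker target space $L^{p),2\theta }$.
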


\begin{proof}
According to the definition of the supremum, for an arbitrary $\eta >0,$
there exists $0<$ $\varepsilon _{0}\leq p-1,$ such that for all $f\in
L^{p),\theta }\left( \Omega \right) ,$ 
\begin{equation}
\sup_{0<\varepsilon \leq p-1}\left( \varepsilon ^{\frac{\theta }{%
p-\varepsilon }}\left\Vert f\right\Vert _{p-\varepsilon }\right) \leq
\varepsilon ^{\frac{\theta }{p-\varepsilon _{0}}}\left\Vert f\right\Vert
_{p-\varepsilon _{0}}+\eta .  \tag{9}
\end{equation}%
Then we have 
\begin{eqnarray}
F_{f}^{p),\theta }\left( x\right) &=&\left\Vert f.\chi _{Q+x}\right\Vert
_{p),\theta }=\sup_{0<\varepsilon \leq p-1}\left( \varepsilon ^{\frac{\theta 
}{p-\varepsilon }}\left\Vert f.\chi _{Q+x}\right\Vert _{p-\varepsilon
}\right)  \TCItag{10} \\
&\leq &\left( \varepsilon _{0}^{\frac{\theta }{p-\varepsilon _{0}}%
}\left\Vert f.\chi _{Q+x}\right\Vert _{p-\varepsilon _{0}}+\eta \right)
=\varepsilon ^{\frac{\theta }{p-\varepsilon _{0}}}F_{f}^{p-\varepsilon
_{0}}+\eta ,  \notag
\end{eqnarray}%
and so%
\begin{equation}
F_{f}^{p),\theta }\left( x\right) =\sup_{0<\varepsilon \leq p-1}\left(
\varepsilon ^{\frac{\theta }{p-\varepsilon }}\left\Vert F_{f}\right\Vert
_{p-\varepsilon }\right) \leq \varepsilon _{0}^{\frac{\theta }{p-\varepsilon
_{0}}}\left\Vert F_{f}\right\Vert _{p-\varepsilon _{0}}+\eta .  \tag{11}
\end{equation}%
Thus from $\left( 10\right) $ and $\left( 11\right) $ we have%
\begin{eqnarray*}
\left\Vert f\right\Vert _{W\left( L^{p),\theta },L^{p),\theta }\right) }
&=&\left\Vert F_{f}^{p),\theta }\right\Vert _{p),\theta
}=\sup_{0<\varepsilon \leq p-1}\varepsilon ^{\frac{\theta }{p-\varepsilon }%
}\left\Vert F_{f}^{p),\theta }\right\Vert _{p-\varepsilon }\leq \varepsilon
_{0}^{\frac{\theta }{p-\varepsilon _{0}}}\left\Vert F_{f}^{p),\theta
}\right\Vert _{p-\varepsilon _{0}}+\eta \\
&\leq &\varepsilon _{0}^{\frac{\theta }{p-\varepsilon _{0}}}\left\Vert
\varepsilon _{0}^{\frac{\theta }{p-\varepsilon _{0}}}\left\Vert f.\chi
_{Q+x}\right\Vert _{p-\varepsilon _{0}}+\eta \right\Vert _{p-\varepsilon
_{0}}+\eta \\
&\leq &\varepsilon _{0}^{\frac{\theta }{p-\varepsilon _{0}}}\left\Vert
\varepsilon _{0}^{\frac{\theta }{p-\varepsilon _{0}}}\left\Vert f.\chi
_{Q+x}\right\Vert _{p-\varepsilon _{0}}\right\Vert _{p-\varepsilon
_{0}}+\varepsilon ^{\frac{\theta }{p-\varepsilon _{0}}}\left\Vert \eta
\right\Vert _{p-\varepsilon _{0}}+\eta \\
&=&\varepsilon _{0}^{\frac{\theta }{p-\varepsilon _{0}}}\varepsilon _{0}^{%
\frac{\theta }{p-\varepsilon _{0}}}\left\Vert f\right\Vert _{W\left(
L^{p-\varepsilon _{0}},L^{p-\varepsilon _{0}}\right) }+\left\Vert \eta
\right\Vert _{p-\varepsilon _{0}}+\eta \\
&=&\varepsilon _{0}^{\frac{\theta }{p-\varepsilon _{0}}}\left( \varepsilon
_{0}^{\frac{\theta }{p-\varepsilon _{0}}}\left\Vert f\right\Vert
_{p-\varepsilon _{0}}\right) +\eta .\mu \left( \Omega \right) +\eta .
\end{eqnarray*}%
If $\eta \rightarrow 0,$ the right side of $\left( 11\right) $ approaches to 
$C_{1}\left\Vert f\right\Vert _{W\left( L^{p),\theta },L^{p),\theta }\right)
}$ and we have%
\begin{equation}
\left\Vert f\right\Vert _{W\left( L^{p),\theta },L^{p),\theta }\right) }\leq
C_{1}\left\Vert f\right\Vert _{p),\theta }  \tag{12}
\end{equation}%
for some constant $C_{1}>0.$

\qquad Conversely let $f\in L^{p),\theta }.$ For this $\eta >0,$we obtain%
\begin{eqnarray*}
\left\Vert f\right\Vert _{p),\theta } &=&\sup_{0<\varepsilon \leq
p-1}\varepsilon ^{\frac{\theta }{p-\varepsilon }}\left\Vert f\right\Vert
_{p-\varepsilon }\leq \varepsilon _{0}^{\frac{\theta }{p-\varepsilon _{0}}%
}\left\Vert f\right\Vert _{p-\varepsilon _{0}}+\eta \\
&=&\varepsilon _{0}^{\frac{\theta }{p-\varepsilon _{0}}}\left\Vert
f\right\Vert _{W\left( L^{p-\varepsilon _{0}},L^{p-\varepsilon _{0}}\right)
}+\eta \leq \\
&=&\varepsilon _{0}^{\frac{\theta }{p-\varepsilon _{0}}}\left\Vert
\left\Vert f.\chi _{Q+x}\right\Vert _{p-\varepsilon _{0}}\right\Vert
_{p-\varepsilon _{0}}+\eta \\
&\leq &\left\Vert \varepsilon _{0}^{\frac{\theta }{p-\varepsilon _{0}}%
}\left\Vert f.\chi _{Q+x}\right\Vert _{p-\varepsilon _{0}}+\eta \right\Vert
_{p-\varepsilon _{0}}+\eta \\
&\leq &\left\Vert \varepsilon _{0}^{\frac{\theta }{p-\varepsilon _{0}}%
}\left\Vert f.\chi _{Q+x}\right\Vert _{p-\varepsilon _{0}}\right\Vert
_{p-\varepsilon _{0}}+\varepsilon _{0}^{\frac{\theta }{p-\varepsilon _{0}}%
}\left\Vert \eta \right\Vert _{p-\varepsilon _{0}}+\eta \\
&=&\varepsilon _{0}^{\frac{-\theta }{p-\varepsilon _{0}}}\left( \varepsilon
_{0}^{\frac{\theta }{p-\varepsilon _{0}}}\left\Vert \varepsilon _{0}^{\frac{%
\theta }{p-\varepsilon _{0}}}\left\Vert f.\chi _{Q+x}\right\Vert
_{p-\varepsilon _{0}}\right\Vert _{p-\varepsilon _{0}}\right) +\varepsilon
_{0}^{\frac{\theta }{p-\varepsilon _{0}}}\left\Vert \eta \right\Vert
_{p-\varepsilon _{0}}+\eta
\end{eqnarray*}%
If $\eta \rightarrow 0,$ then the right side of $\left( 12\right) $
approaches to $C_{2}\left\Vert f\right\Vert _{W\left( L^{p),\theta
},L^{p),\theta }\right) }$ and we have%
\begin{equation}
\left\Vert f\right\Vert _{p),\theta }\leq C_{2}\left\Vert f\right\Vert
_{W\left( L^{p),\theta },L^{p),\theta }\right) }  \tag{13}
\end{equation}%
for some constant $C_{2}>0.$ Combining $\left( 12\right) $ and $\left(
13\right) ,$ we obtain $W\left( L^{p),\theta },L^{p),\theta }\right) \left(
\Omega \right) =L^{p),\theta }\left( \Omega \right) .$
\end{proof}

\section{H\"{o}lder's inequality, Duality and reflexivity in grand Wiener
amalgam spaces}

It is known by Theorem 2, that $\left\Vert .\right\Vert _{W\left(
L^{p),\theta },L^{q),\theta }\right) }$ is a Banach function norm and $%
W\left( L^{p),\theta },L^{q),\theta }\right) \left( \Omega \right) $ is a
Banach function space.

\begin{definition}
The associate space of \ $W\left( L^{p),\theta },L^{q),\theta }\right)
\left( \Omega \right) $ determined by the associate norm $\left\Vert
.\right\Vert _{W\left( L^{p),\theta },L^{q),\theta }\right) ^{^{\prime }}}$
is 
\begin{equation*}
W\left( L^{p),\theta },L^{q),\theta }\right) ^{^{\prime }}\left( \Omega
\right) =\left\{ g\in \mathcal{M}_{0}:\left\Vert g\right\Vert _{W\left(
L^{p),\theta },L^{q),\theta }\right) ^{^{\prime }}}<\infty \right\} ,
\end{equation*}%
where 
\begin{equation*}
\left\Vert g\right\Vert _{W\left( L^{p),\theta },L^{q),\theta }\right)
^{^{\prime }}}=\sup \left\{ \dint\limits_{\Omega }\left\vert fg\right\vert
d\mu :f\in \mathcal{M}_{0}\left( \Omega \right) ,\left\Vert f\right\Vert
_{W\left( L^{p),\theta },L^{q),\theta }\right) }\leq 1\right\} .
\end{equation*}
\end{definition}

\begin{theorem}
(\textbf{H\"{o}lder's inequality}) If $f\in W\left( L^{p),\theta
},L^{q),\theta }\right) \left( \Omega \right) $ and $g\in W\left(
L^{p),\theta },L^{q),\theta }\right) ^{^{\prime }}\left( \Omega \right) ,$
then $fg$ is integrable and 
\begin{equation}
\dint\limits_{\Omega }f\left( x\right) .g\left( x\right) dx\leq \left\Vert
f\right\Vert _{W\left( L^{p)},L^{q)}\right) }.\left\Vert g\right\Vert
_{W\left( L^{p),\theta },L^{q),\theta }\right) ^{^{\prime }}}.  \tag{14}
\end{equation}
\end{theorem}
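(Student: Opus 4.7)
The plan is to derive the inequality directly from the definition of the associate norm, following the standard Banach function space argument. By Definition 2, the associate norm is
\begin{equation*}
\left\Vert g\right\Vert _{W\left( L^{p),\theta },L^{q),\theta }\right) ^{\prime }}=\sup \left\{ \int_{\Omega }\left\vert fh\right\vert d\mu : h\in \mathcal{M}_{0}(\Omega ),\ \left\Vert h\right\Vert _{W\left( L^{p),\theta },L^{q),\theta }\right) }\leq 1\right\} ,
\end{equation*}
so the desired bound is really an almost-tautological consequence of taking a normalized test function.

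First I would dispose of the trivial case $\Vert f\Vert _{W\left( L^{p),\theta },L^{q),\theta }\right) }=0$. By property~2 of Theorem~2, this forces $f=0$ a.e.\ in $\Omega $, hence $fg=0$ a.e.\ and both sides of (14) vanish. In the remaining case $\Vert f\Vert _{W\left( L^{p),\theta },L^{q),\theta }\right) }>0$, I would normalize and set $\tilde{f}=f/\Vert f\Vert _{W\left( L^{p),\theta },L^{q),\theta }\right) }$. By property~3 (homogeneity, already proved in Theorem~2), $\Vert \tilde{f}\Vert _{W\left( L^{p),\theta },L^{q),\theta }\right) }=1$, so $\tilde{f}$ is an admissible test function in the supremum defining the associate norm; therefore
\begin{equation*}
\int_{\Omega }\left\vert \tilde{f}g\right\vert d\mu \leq \left\Vert g\right\Vert _{W\left( L^{p),\theta },L^{q),\theta }\right) ^{\prime }}.
\end{equation*}

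Multiplying both sides by $\Vert f\Vert _{W\left( L^{p),\theta },L^{q),\theta }\right) }$ and invoking the homogeneity of the Lebesgue integral yields
\begin{equation*}
\int_{\Omega }\left\vert fg\right\vert d\mu \leq \left\Vert f\right\Vert _{W\left( L^{p),\theta },L^{q),\theta }\right) }\cdot \left\Vert g\right\Vert _{W\left( L^{p),\theta },L^{q),\theta }\right) ^{\prime }}.
\end{equation*}
In particular, $fg$ is integrable. Finally, the elementary pointwise inequality $fg\leq \vert fg\vert $ gives (14) after integration.

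There is no real obstacle here: the entire content of the statement has been pre-loaded into Definition~2, and Theorem~2 supplies the Banach function space properties (finiteness of the norm on indicators, homogeneity, absolute value compatibility) that make the normalization argument legitimate. The only small point of attention is to read the right-hand side of (14) consistently with Definition~2---i.e., the first factor should be the full $W(L^{p),\theta },L^{q),\theta })$-norm of $f$, matching the admissibility class in the supremum (the bare $W(L^{p)},L^{q)})$ notation in the statement appears to be a typographical shorthand).
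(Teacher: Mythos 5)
Your proof is correct and is essentially the paper's argument made explicit: the paper merely notes that $W(L^{p),\theta},L^{q),\theta})$ is a Banach function space (Theorem 2) and cites the general H\"{o}lder inequality for such spaces ([3], Theorem 2.4), while you reproduce the standard proof of that cited result, namely normalizing $f$ and testing it against the supremum defining the associate norm, with the degenerate case $\Vert f\Vert =0$ handled via property 2 of Theorem 2. Your closing remark is also apt: the subscript $W(L^{p)},L^{q)})$ on the right-hand side of (14) should indeed be read as $W(L^{p),\theta},L^{q),\theta})$ to be consistent with Definition 2.
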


\begin{proof}
Since $W\left( L^{p),\theta },L^{q),\theta }\right) \left( \Omega \right) $
is a Banach function space by Theorem 2, the proof is completed by Theorem
2.4., in $\left[ 3\right] .$
\end{proof}

\begin{proposition}
The closure $\overline{C_{0}^{\infty }}\mid _{W\left( L^{p),\theta
},L^{q),\theta }\right) }$of the set $C_{0}^{\infty }$ in the space $W\left(
L^{p),\theta },L^{q),\theta }\right) \left( \Omega \right) $ consists of
functions $f\in W\left( L^{p),\theta },L^{q),\theta }\right) \left( \Omega
\right) $ such that 
\begin{equation}
\lim_{\varepsilon \rightarrow 0}\varepsilon ^{\frac{\theta }{p-\varepsilon }%
}\left\Vert f\right\Vert _{W\left( L^{p-\varepsilon },L^{q-\varepsilon
}\right) }=0,  \tag{15}
\end{equation}%
where $C_{0}^{\infty }$ is the space of infinitely differentiable complex
valued functions with compact support.
\end{proposition}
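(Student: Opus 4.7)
The plan is to prove both set-inclusions separately, using $C_0^\infty$ as an easy base case and a quantitative comparison between the grand Wiener amalgam norm and its ``reduced exponent'' Wiener amalgam counterparts.

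First I would verify that every $\varphi \in C_0^\infty$ itself satisfies \textup{(15)}. Since $\varphi$ is bounded with compact support and $\mu(\Omega) < \infty$, the norm $\|\varphi\|_{W(L^{p-\varepsilon}, L^{q-\varepsilon})}$ stays uniformly bounded as $\varepsilon \to 0^+$, while the prefactor $\varepsilon^{\theta/(p-\varepsilon)} \to 0$. For the inclusion $\overline{C_0^\infty}\mid_{W(L^{p),\theta},L^{q),\theta})} \subseteq \{f : (15)\text{ holds}\}$, it then suffices to check that the set of $f$ satisfying \textup{(15)} is closed in the grand Wiener amalgam norm. The needed ingredient is a bound of the form
\begin{equation*}
\varepsilon^{\theta/(p-\varepsilon)}\,\|h\|_{W(L^{p-\varepsilon},L^{q-\varepsilon})} \;\leq\; C\,\|h\|_{W(L^{p),\theta},L^{q),\theta})},
\end{equation*}
which I would obtain by pulling the factor $\varepsilon^{\theta/(p-\varepsilon)}$ out of the inner grand Lebesgue supremum defining $F_h^{p),\theta}$ and then estimating the outer grand Lebesgue norm at a convenient fixed value (say $\eta$ bounded away from $0$). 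With such a $C$ in hand, a standard $\delta/2$ triangle-inequality argument transfers \textup{(15)} through norm limits.

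For the reverse inclusion, fix $f$ satisfying \textup{(15)} and $\delta > 0$. Using \textup{(15)}, I would choose $\varepsilon_0 \in (0, p-1]$ so small that $\varepsilon^{\theta/(p-\varepsilon)}\|f\|_{W(L^{p-\varepsilon},L^{q-\varepsilon})} < \delta/3$ for all $0 < \varepsilon \leq \varepsilon_0$. Since $f \in W(L^{p-\varepsilon_0},L^{q-\varepsilon_0})$ by Proposition~1, and $C_0^\infty$ is dense in the classical Wiener amalgam space, I would pick $\varphi \in C_0^\infty$ with $\|f - \varphi\|_{W(L^{p-\varepsilon_0},L^{q-\varepsilon_0})}$ suitably small. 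To estimate $\|f - \varphi\|_{W(L^{p),\theta},L^{q),\theta})}$, I would split the outer supremum into the ranges $0 < \varepsilon \leq \varepsilon_0$ and $\varepsilon_0 < \varepsilon \leq p-1$. On the first range, use \textup{(15)} applied to $f$ together with its triviality for $\varphi$. On the second range, use the nested embedding $L^{p-\varepsilon_0} \hookrightarrow L^{p-\varepsilon}$ (valid since $\mu(\Omega) < \infty$) together with the uniform bound $\varepsilon^{\theta/(p-\varepsilon)} \leq (p-1)^\theta$ to reduce everything to a multiple of $\|f - \varphi\|_{W(L^{p-\varepsilon_0},L^{q-\varepsilon_0})}$.

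The main technical obstacle will be the decoupling of the two grand-Lebesgue suprema in the single-$\varepsilon$ estimate: the natural inequality derived straight from the definitions carries a double weight $\varepsilon^{\theta/(p-\varepsilon)+\theta/(q-\varepsilon)}$, and reconciling this with the cleaner single-factor form appearing in \textup{(15)} requires controlling $\varepsilon^{-\theta/(q-\varepsilon)}$ uniformly on each sub-range considered. Once this split is made precise, all remaining ingredients are standard: density of smooth functions in ordinary Wiener amalgam spaces, H\"older-type embeddings on finite measure spaces, and the monotonicity inclusions already established in Propositions~1--3.
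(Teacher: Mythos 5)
Your strategy for the inclusion $\overline{C_{0}^{\infty }}\mid _{W\left( L^{p),\theta },L^{q),\theta }\right) }\subseteq \left\{ f:(15)\text{ holds}\right\} $ is essentially the paper's: approximate $f$ by a nice function, verify $(15)$ for the approximant via H\"{o}lder's inequality on the finite measure space, and transfer the limit through a norm comparison. The difficulty is exactly the one you flag and then leave unresolved. The inequality you need, $\varepsilon ^{\theta /(p-\varepsilon )}\left\Vert h\right\Vert _{W\left( L^{p-\varepsilon },L^{q-\varepsilon }\right) }\leq C\left\Vert h\right\Vert _{W\left( L^{p),\theta },L^{q),\theta }\right) }$ with $C$ independent of $\varepsilon $, does not follow from ``pulling the factor out of the inner supremum and evaluating the outer grand norm at a convenient fixed value.'' Pulling out gives $\varepsilon ^{\theta /(p-\varepsilon )}\left\Vert F_{h}^{p-\varepsilon }\right\Vert _{q-\varepsilon }\leq \left\Vert F_{h}^{p),\theta }\right\Vert _{q-\varepsilon }$, and the only relation the definition provides between $\left\Vert G\right\Vert _{q-\varepsilon }$ and $\left\Vert G\right\Vert _{q),\theta }$ is $\left\Vert G\right\Vert _{q-\varepsilon }\leq \varepsilon ^{-\theta /(q-\varepsilon )}\left\Vert G\right\Vert _{q),\theta }$, whose constant blows up precisely on the range $\varepsilon \rightarrow 0$ that $(15)$ is about. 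Evaluating the grand norm at a fixed $\eta _{0}$ bounded away from $0$ only yields control of $\left\Vert G\right\Vert _{q-\eta _{0}}$, and on a finite measure space H\"{o}lder gives $\left\Vert G\right\Vert _{q-\eta _{0}}\lesssim \left\Vert G\right\Vert _{q-\varepsilon }$ for $\varepsilon <\eta _{0}$ --- the wrong direction. So your ``standard $\delta /2$ argument'' has no uniform constant to run with; controlling $\varepsilon ^{-\theta /(q-\varepsilon )}$ uniformly near $\varepsilon =0$ is not possible. (For what it is worth, the paper's own proof commits the same unjustified step in its final display, passing from $\varepsilon ^{\theta /(p-\varepsilon )}\left\Vert f_{n_{0}}-f\right\Vert _{W\left( L^{p-\varepsilon },L^{q-\varepsilon }\right) }$ to $\left\Vert f_{n_{0}}-f\right\Vert _{W\left( L^{p),\theta },L^{q),\theta }\right) }$ without the stray factor; the quantity the grand Wiener norm naturally controls carries the double weight $\varepsilon ^{\theta /(p-\varepsilon )+\theta /(q-\varepsilon )}$.)

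On the reverse inclusion you attempt more than the paper does --- the paper states a characterization, proves only the direction above, and then declares the proof complete --- but your sketch also does not close. Condition $(15)$ is a statement about the diagonal $\eta =\varepsilon $ of the two-parameter family $\eta ^{\theta /(q-\eta )}\left\Vert \varepsilon ^{\theta /(p-\varepsilon )}F_{f-\varphi }^{p-\varepsilon }\right\Vert _{q-\eta }$, whereas $\left\Vert f-\varphi \right\Vert _{W\left( L^{p),\theta },L^{q),\theta }\right) }$ is a supremum over $\eta $ of the $L^{q-\eta }$ norm of a supremum over $\varepsilon $. Splitting ``the outer supremum'' at $\varepsilon _{0}$ does not isolate the regime where $\eta $ is small but $\varepsilon $ is not (or vice versa), and there neither $(15)$ nor the embedding $L^{p-\varepsilon _{0}}\hookrightarrow L^{p-\varepsilon }$ gives control. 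A complete proof of either inclusion has to confront this mismatch between the diagonal quantity in $(15)$ and the genuinely two-parameter grand Wiener norm; as they stand, both your argument and the paper's are incomplete at the same point.
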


\begin{proof}
First we will show that $\left( 15\right) $ holds for $\overline{%
C_{0}^{\infty }}\mid _{W\left( L^{p),\theta },L^{q),\theta }\right) }.$
Since $C_{0}^{\infty }\left( \Omega \right) $ is dense $L^{p}\left( \Omega
\right) ,$ then $C_{0}^{\infty }\left( \Omega \right) $ is dense in $W\left(
L^{p},L^{q}\right) $ by Theorem 1, in $\left[ 9\right] .$ Let $f\in 
\overline{C_{0}^{\infty }}\mid _{W\left( L^{p),\theta },L^{q),\theta
}\right) }.$ Then there exists a sequence $\left( f_{n}\right) \subset
W\left( L^{p},L^{q}\right) $ such that 
\begin{equation*}
\left\Vert f_{n}-f\right\Vert _{W\left( L^{p},L^{q}\right) }\rightarrow 0.
\end{equation*}%
Thus for given $\eta >0,$ there exists $n_{0}\in 
\mathbb{N}
$ such that 
\begin{equation}
\left\Vert f_{n_{0}}-f\right\Vert _{W\left( L^{p},L^{q}\right) }<\frac{\eta 
}{2}.  \tag{16}
\end{equation}%
Since 
\begin{equation*}
\varepsilon ^{\frac{\theta }{p-\varepsilon }}\left\Vert f_{n_{0}}\right\Vert
_{W\left( L^{p-\varepsilon },L^{q-\varepsilon }\right) }=\varepsilon ^{\frac{%
\theta }{p-\varepsilon }}\left\Vert \left\Vert f_{n_{0}}\chi
_{Q+x}\right\Vert _{p-\varepsilon }\right\Vert _{q-\varepsilon },
\end{equation*}%
and $f_{n_{0}}\chi _{Q+x}\in L^{p},$ by the H\"{o}lder's inequality%
\begin{equation}
\varepsilon ^{\frac{\theta }{p-\varepsilon }}\left\Vert f_{n_{0}}\chi
_{Q+x}\right\Vert _{p-\varepsilon }=\varepsilon ^{\frac{\theta }{%
p-\varepsilon }}\left\{ \dint\limits_{\Omega }\left\vert f_{n_{0}}\left(
t\right) \chi _{Q+x}\right\vert ^{p-\varepsilon }dt\right\} ^{\frac{1}{%
p-\varepsilon }}  \tag{17}
\end{equation}%
\begin{eqnarray}
&\leq &\varepsilon ^{\frac{\theta }{p-\varepsilon }}\left\Vert f_{n_{0}}\chi
_{Q+x}\right\Vert _{p}\mu \left( \Omega \right) ^{\frac{1}{p-\varepsilon }-%
\frac{1}{p}}  \notag \\
&=&\varepsilon ^{\frac{\theta }{p-\varepsilon }}\mu \left( \Omega \right) ^{%
\frac{\varepsilon }{p\left( p-\varepsilon \right) }}\left\Vert f_{n_{0}}\chi
_{Q+x}\right\Vert _{p}.  \notag
\end{eqnarray}%
Thus from $\left( 17\right) ,$ 
\begin{eqnarray*}
\left\Vert \varepsilon ^{\frac{\theta }{p-\varepsilon }}\left\Vert
f_{n_{0}}\chi _{Q+x}\right\Vert _{p-\varepsilon }\right\Vert _{q-\varepsilon
} &\leq &\left\Vert \varepsilon ^{\frac{\theta }{p-\varepsilon }}\mu \left(
\Omega \right) ^{\frac{\varepsilon }{p\left( p-\varepsilon \right) }%
}\left\Vert f_{n_{0}}\chi _{Q+x}\right\Vert _{p}\right\Vert _{q-\varepsilon }
\\
&=&\varepsilon ^{\frac{\theta }{p-\varepsilon }}\mu \left( \Omega \right) ^{%
\frac{\varepsilon }{p\left( p-\varepsilon \right) }}\left\Vert \left\Vert
f_{n_{0}}\chi _{Q+x}\right\Vert _{p}\right\Vert _{q-\varepsilon } \\
&\leq &\varepsilon ^{\frac{\theta }{p-\varepsilon }}\mu \left( \Omega
\right) ^{\frac{\varepsilon }{p\left( p-\varepsilon \right) }}\mu \left(
\Omega \right) ^{\frac{1}{q-\varepsilon }-\frac{1}{q}}\left\Vert \left\Vert
f_{n_{0}}\chi _{Q+x}\right\Vert _{p}\right\Vert _{q} \\
&=&\varepsilon ^{\frac{\theta }{p-\varepsilon }}\mu \left( \Omega \right) ^{%
\frac{\varepsilon }{p\left( p-\varepsilon \right) }+\frac{\varepsilon }{%
q\left( q-\varepsilon \right) }}\left\Vert f_{n_{0}}\right\Vert _{W\left(
L^{p},L^{q}\right) }\rightarrow 0,\text{ }
\end{eqnarray*}%
as $\varepsilon \rightarrow 0.$ Hence there exists $\varepsilon _{0}$ such
that when $\varepsilon <\varepsilon _{0},$%
\begin{equation}
\varepsilon ^{\frac{\theta }{p-\varepsilon }}\left\Vert f_{n_{0}}\right\Vert
_{_{W\left( L^{p-\varepsilon },L^{q-\varepsilon }\right) }}=\left\Vert
\varepsilon ^{\frac{\theta }{p-\varepsilon }}\left\Vert f_{n_{0}}\chi
_{Q+x}\right\Vert _{p-\varepsilon }\right\Vert _{q-\varepsilon }\leq \frac{%
\eta }{2}.  \tag{18}
\end{equation}%
Then by $\left( 16\right) $ and $\left( 18\right) ,$%
\begin{eqnarray*}
\varepsilon ^{\frac{\theta }{p-\varepsilon }}\left\Vert f\right\Vert
_{W\left( L^{p-\varepsilon },L^{q-\varepsilon }\right) } &\leq &\varepsilon
^{\frac{\theta }{p-\varepsilon }}\left\Vert f_{n_{0}}-f\right\Vert
_{_{W\left( L^{p-\varepsilon },L^{q-\varepsilon }\right) }}+\varepsilon ^{%
\frac{\theta }{p-\varepsilon }}\left\Vert f_{n_{0}}\right\Vert _{_{W\left(
L^{p-\varepsilon },L^{q-\varepsilon }\right) }} \\
&\leq &\left\Vert f_{n_{0}}-f\right\Vert _{W\left( L^{p),\theta
},L^{q),\theta }\right) }+\frac{\eta }{2}<\frac{\eta }{2}+\frac{\eta }{2}%
=\eta
\end{eqnarray*}%
when $\varepsilon <\varepsilon _{0}.$ This completes the proof.
\end{proof}

\begin{proposition}
If $q\leq p,$ and $\theta \geq 1,$ then the set $C_{0}^{\infty }\left(
\Omega \right) $ is not dense in $W\left( L^{p),\theta },L^{q),\theta
}\right) \left( \Omega \right) .$
\end{proposition}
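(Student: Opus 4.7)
The plan is to exhibit a concrete $f_\theta \in W(L^{p),\theta},L^{q),\theta})(\Omega)$ that violates the limit condition $(15)$ of the preceding proposition, and then invoke that proposition to conclude $f_\theta \notin \overline{C_0^\infty}\mid_{W(L^{p),\theta},L^{q),\theta})}$, so that $C_0^\infty(\Omega)$ cannot be dense. Working on $\Omega=(0,1)$, as in the example used in Proposition 1, I would generalize the test function $f(x)=x^{-1/p}$ by inserting a logarithmic factor whose strength is tuned to $\theta$, namely
\[
f_\theta(x)=x^{-1/p}\bigl(\log(1/x)\bigr)^{(\theta-1)/p}\,\chi_{(0,1/e]}(x).
\]
For $\theta=1$ this collapses to the example of Proposition 1, while for $\theta>1$ the logarithmic factor produces precisely the additional growth near $0$ needed to counter the extra weight $\varepsilon^{(\theta-1)/(p-\varepsilon)}$ appearing in the grand norm.

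The substitution $u=-\log x$ converts $\|f_\theta\|_{p-\varepsilon}^{p-\varepsilon}$ into a Gamma-type integral $\int_{1}^{\infty}e^{-u\varepsilon/p}u^{(\theta-1)(p-\varepsilon)/p}\,du$, and a short Laplace-style asymptotic yields
\[
\|f_\theta\|_{p-\varepsilon}^{p-\varepsilon}\;\sim\;\Gamma(\theta)\,(p/\varepsilon)^{\theta}\qquad (\varepsilon\to 0^+),
\]
so that $\varepsilon^{\theta/(p-\varepsilon)}\|f_\theta\|_{p-\varepsilon}\to p^{\theta/p}\Gamma(\theta)^{1/p}>0$; after checking that the sup over $\varepsilon\in(0,p-1]$ is finite, this gives $f_\theta\in L^{p),\theta}(\Omega)$. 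I would then invoke Proposition 5 ($W(L^{p),\theta},L^{p),\theta})=L^{p),\theta}$) together with Proposition 3 (applied with $q_2=q\leq p=q_1$) to obtain the embedding $L^{p),\theta}(\Omega)\subset W(L^{p),\theta},L^{q),\theta})(\Omega)$; this is where the hypothesis $q\leq p$ is used, and it places $f_\theta$ in the amalgam. To promote the non-vanishing of $\varepsilon^{\theta/(p-\varepsilon)}\|f_\theta\|_{p-\varepsilon}$ to the corresponding non-vanishing of the amalgam quantity appearing in $(15)$, I would enlarge the fixed compact set $Q$ so that $Q+x\supset\Omega$ for every $x$ in a measurable set $E\subset\Omega$ of positive measure $\delta$; on $E$ one has $\|f_\theta\chi_{Q+x}\|_{p-\varepsilon}=\|f_\theta\|_{p-\varepsilon}$, whence
\[
\|f_\theta\|_{W(L^{p-\varepsilon},L^{q-\varepsilon})}\;\geq\;\delta^{1/(q-\varepsilon)}\,\|f_\theta\|_{p-\varepsilon}.
\]

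Combining these two bounds, $\varepsilon^{\theta/(p-\varepsilon)}\|f_\theta\|_{W(L^{p-\varepsilon},L^{q-\varepsilon})}$ remains bounded below by a strictly positive constant as $\varepsilon\to 0^+$, so $f_\theta$ fails $(15)$ and the preceding proposition finishes the argument. The main obstacle is the asymptotic estimate $\|f_\theta\|_{p-\varepsilon}^{p-\varepsilon}\asymp(p/\varepsilon)^{\theta}$: the log-exponent $(\theta-1)/p$ is tuned precisely so that the resulting power of $1/\varepsilon$ cancels the weight $\varepsilon^{\theta/(p-\varepsilon)}$ exactly, which is also why the hypothesis $\theta\geq 1$ is essential, as it keeps the logarithmic factor non-negative and $f_\theta$ well-defined with the requisite growth near the origin.
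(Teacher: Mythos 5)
Your route is genuinely different from the paper's: the paper simply re-uses the witness $t^{-1/p}$ from Proposition 1, notes it lies in $W\left( L^{p),\theta },L^{q),\theta }\right)(0,1)$, and then quotes $t^{-1/p}\notin \overline{C_{0}^{\infty }}\mid _{L^{p),\theta }}$ together with an inclusion of closures, never touching the amalgam-level limit $(15)$. Your logarithmic modification is actually an improvement on the paper's witness: since $\varepsilon ^{\theta /(p-\varepsilon )}\left\Vert t^{-1/p}\right\Vert _{p-\varepsilon }=\varepsilon ^{(\theta -1)/(p-\varepsilon )}p^{1/(p-\varepsilon )}\rightarrow 0$ whenever $\theta >1$, the paper's key claim is only transparent at $\theta =1$, whereas your $f_{\theta }$ gives the nonzero limit $p^{\theta /p}\Gamma (\theta )^{1/p}$ for every $\theta \geq 1$ (your Gamma-integral asymptotic checks out). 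The membership step via Proposition 5 and Proposition 3 with $q_{2}=q\leq p=q_{1}$ is also a correct use of the paper's results and correctly isolates where $q\leq p$ enters.

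The gap is in transferring the lower bound to the amalgam quantity in $(15)$. You propose to enlarge $Q$ so that $Q+x\supset \Omega $ for all $x$ in a set of positive measure. This is impossible in the paper's setting: $Q$ is a fixed compact subset of $\Omega =(0,1)$ and $x$ ranges over $\Omega $, so $Q+x$ is contained in $(\min Q+x,\,1+\max Q)$ and always misses $(0,\min Q]$ --- exactly where the singularity of $f_{\theta }$ sits. Hence $\left\Vert f_{\theta }\chi _{Q+x}\right\Vert _{p-\varepsilon }$ remains bounded uniformly in $\varepsilon $ for each fixed $x>0$, $\varepsilon ^{\theta /(p-\varepsilon )}\left\Vert f_{\theta }\chi _{Q+x}\right\Vert _{p-\varepsilon }\rightarrow 0$ pointwise, and your claimed bound $\left\Vert f_{\theta }\right\Vert _{W\left( L^{p-\varepsilon },L^{q-\varepsilon }\right) }\geq \delta ^{1/(q-\varepsilon )}\left\Vert f_{\theta }\right\Vert _{p-\varepsilon }$ fails. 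Moreover, by the discussion following Definition 1, the space $W\left( L^{p),\theta },L^{q),\theta }\right) $ depends on the choice of $Q$ when $\theta >0$, so you are not free to replace $Q$ by a larger set after the fact: that changes the space in which density is being disproved. Two repairs suggest themselves: either relocate the singularity to an interior point, e.g. $f_{\theta }(x)=\left\vert x-\tfrac{1}{2}\right\vert ^{-1/p}\left( \log \left( 1/\left\vert x-\tfrac{1}{2}\right\vert \right) \right) ^{(\theta -1)/p}$, for which $Q+x$ does contain a fixed neighbourhood of $\tfrac{1}{2}$ for all $x$ in a set of positive measure and your lower bound then goes through; or argue as the paper does, entirely at the level of $L^{p),\theta }$, combining $f_{\theta }\notin \overline{C_{0}^{\infty }}\mid _{L^{p),\theta }}$ (which your computation already establishes) with an inclusion of closures $\overline{C_{0}^{\infty }}\mid _{W\left( L^{p),\theta },L^{q),\theta }\right) }\subset \overline{C_{0}^{\infty }}\mid _{L^{p),\theta }}$, thereby avoiding $(15)$ altogether.
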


\begin{proof}
It is enough to give the proof for $\ $the special case $\Omega =\left(
0,1\right) .$ Let $f\left( t\right) =t^{\frac{1}{p}},$ for $p>1.$ We showed
in Proposition $2,$ that $t^{-\frac{1}{p}}\in W\left( L^{p),\theta
},L^{q),\theta }\right) \left( 0,1\right) .$ In the other hand since $%
\overline{C_{0}^{\infty }}\mid _{W\left( L^{p),\theta },L^{q),\theta
}\right) }\subset $ $\overline{C_{0}^{\infty }}\mid _{L^{p),\theta }}$ and $%
t^{-\frac{1}{p}}\notin \overline{C_{0}^{\infty }}\left( 0,1\right) \mid
_{L^{p),\theta }},$ then $t^{-\frac{1}{p}}\notin $ $\overline{C_{0}^{\infty }%
}\mid _{W\left( L^{p),\theta },L^{q),\theta }\right) }.$ Thus $t^{-\frac{1}{p%
}}\in $ $W\left( L^{p),\theta },L^{q),\theta }\right) \smallsetminus 
\overline{C_{0}^{\infty }}\mid _{W\left( L^{p),\theta },L^{q),\theta
}\right) }.$ This ends the proof.
\end{proof}

\begin{definition}
Let $\left( X,\left\Vert .\right\Vert _{X}\right) $ be a Banach function
space and let $f\in X$ be any arbitrary function. We say that $f$ has
absolutely continuous norm in $X$ if%
\begin{equation*}
\lim_{n\rightarrow \infty }\left\Vert f\chi _{E_{n}}\right\Vert _{X}=0
\end{equation*}%
for every sequence $\left\{ E_{n}\right\} _{n\in 
\mathbb{N}
}$ satisfying $E_{n}\rightarrow \phi .$\textit{We will denote the subspace
of the functions in }$X$\textit{\ with absolutely continuous norm by }$X_{a}$%
\textit{. If }$X=X_{a}$\textit{, then }$X$\textit{\ it self is said to have
absolute continuous norm}$.$
\end{definition}

We need the following known two theorems to find the dual of the grand
Wiener amalgam spaces..

\begin{theorem}
( See $\left[ 3\right] ,$ Corollary $4.3$). The dual space $X^{\ast }$of a
Banach function space $X$ is isometric to the associate space $X^{^{\prime
}} $ if and only if $X$ has absolutely continuous norm.
\end{theorem}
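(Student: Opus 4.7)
The plan is to handle the two implications of the biconditional separately, both revolving around the canonical linear map $J\colon X' \to X^{*}$ defined by $J(g)(f) := \int_{\Omega} fg\,d\mu$. One first observes that $J$ is always a well-defined isometric embedding: well-definedness and the bound $\|J(g)\|_{X^{*}}\le\|g\|_{X'}$ follow from the H\"older-type inequality for Banach function spaces (the abstract analogue of Theorem $5$ in this paper), while the reverse inequality $\|J(g)\|_{X^{*}}\ge\|g\|_{X'}$ is immediate from the very definition of the associate norm as a supremum over the unit ball of $X$. Consequently, the whole content of the theorem is the surjectivity of $J$ when $X=X_{a}$, together with the failure of surjectivity when $X\ne X_{a}$.

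For the forward direction, assume $X$ has absolutely continuous norm and take $L\in X^{*}$. I would define the set function $\nu(E):=L(\chi_{E})$ on every measurable $E$ for which $\chi_{E}\in X$; property $7$ of a Banach function space guarantees that this includes all sets of finite measure. The first substantive step is to prove countable additivity of $\nu$: for disjoint $E_{n}$ with union $E$ one has $\chi_{E}-\sum_{n\le N}\chi_{E_{n}}=\chi_{\bigcup_{n>N}E_{n}}$, and since the tails descend to $\emptyset$, absolute continuity of the norm of $\chi_{E}$ forces these tails to zero in $X$, which by continuity of $L$ gives $\nu(E)=\sum_{n}\nu(E_{n})$. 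The implication $\mu(E)=0 \Rightarrow \nu(E)=0$ is immediate, so the Radon--Nikodym theorem provides $g$ with $\nu(E)=\int_{E}g\,d\mu$. Linearity extends this to simple functions, and the density of simple functions in $X$ (itself a consequence of $X=X_{a}$) then gives $L(f)=\int_{\Omega}fg\,d\mu$ for every $f\in X$, with $g\in X'$ and $\|g\|_{X'}=\|L\|_{X^{*}}$.

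For the converse I would argue by contrapositive: suppose $X\ne X_{a}$, so there exist $f_{0}\in X$ and a decreasing sequence $E_{n}\downarrow\emptyset$ with $\liminf_{n}\|f_{0}\chi_{E_{n}}\|_{X}>0$. On the linear span of the tails $\{f_{0}\chi_{E_{n}}\}$ one constructs, via a Banach-limit-type procedure, a bounded linear functional that detects the non-vanishing tail norm, and then extends it to all of $X$ by Hahn--Banach. No $g\in X'$ can represent such a functional, because for every $g\in X'$ a dominated convergence argument based on property $8$ forces $\int f_{0}\chi_{E_{n}}\,g\,d\mu\to 0$, contradicting the non-vanishing of the extended functional on this sequence.

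The hard step will be the countable additivity of $\nu$ in the forward direction: this is the only point where absolute continuity of the norm enters essentially, and without it the associate space sits strictly inside the dual. Everything else is a routine Radon--Nikodym plus density argument, and the Hahn--Banach construction in the converse is standard. Since the statement is precisely Corollary $4.3$ of Bennett--Sharpley $\left[ 3\right] $, in writing the paper I would defer the detailed book-keeping to that reference while presenting the sketch above to make clear why the hypotheses of that corollary apply to $X=W(L^{p),\theta},L^{q),\theta})$ in view of Theorem $2$.
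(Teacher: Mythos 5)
You should first be aware that the paper contains no proof of this statement at all: it is quoted as a known result from Bennett--Sharpley \cite{} (reference $[3]$, Corollary $4.3$) and used as a black box later on, so there is nothing in the paper to compare your argument against. What you have written is, in substance, a correct reconstruction of the standard proof in that reference: the canonical map $J\colon X^{\prime}\to X^{\ast}$, $J(g)(f)=\int_{\Omega}fg\,d\mu$, is always an isometric embedding; when $X=X_{a}$ surjectivity follows by applying Radon--Nikodym to $\nu(E)=L(\chi_{E})$ (countable additivity of $\nu$ being precisely where absolute continuity of the norm is used, via the vanishing of the tails $\chi_{\cup_{n>N}E_{n}}$ in $X$) together with the density of simple functions in $X_{a}$; and when $X\neq X_{a}$ one exhibits a singular functional. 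Two points in your sketch deserve to be made explicit if you write it out. First, in the converse direction the boundedness of your functional on the span of the tails is not automatic from the ``Banach-limit-type procedure''; it rests on the lattice inequality $\vert\sum_{i}a_{i}f_{0}\chi_{E_{n_{i}}}\vert\geq\vert\sum_{i}a_{i}\vert\,\vert f_{0}\vert\chi_{E_{n_{k}}}$ (with $E_{n_{k}}$ the smallest of the sets involved, the $E_{n}$ being decreasing), which yields $\Vert\sum_{i}a_{i}f_{0}\chi_{E_{n_{i}}}\Vert_{X}\geq\delta\,\vert\sum_{i}a_{i}\vert$ where $\delta=\inf_{n}\Vert f_{0}\chi_{E_{n}}\Vert_{X}>0$; only then does Hahn--Banach apply. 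Second, the dominated-convergence step showing $\int f_{0}\chi_{E_{n}}g\,d\mu\to 0$ for every $g\in X^{\prime}$ uses the H\"{o}lder inequality (i.e.\ $f_{0}g\in L^{1}(\Omega)$ because $g$ lies in the associate space), not property $8$ of the Banach function norm, which only controls integrals of $f$ itself over measurable sets. With those repairs your outline is sound, and deferring the remaining bookkeeping to the reference is exactly what the paper itself does; the only genuinely paper-specific content is the observation, which you make, that the hypotheses apply to $X=W\left(L^{p),\theta},L^{q),\theta}\right)$ because that space was shown to be a Banach function space.
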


\begin{theorem}
( See $\left[ 3\right] ,$ Corollary $4.4$). A Banach function space is
reflexive if and only if both $X$ and its associate space $X^{^{\prime }}$
have absolute continuous norm.
\end{theorem}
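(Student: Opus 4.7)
The plan is to combine the preceding theorem (Corollary 4.3 of [3], which identifies $X^{*}$ with the associate space $X'$ precisely when $X$ has absolutely continuous norm) with the Lorentz--Luxemburg duality theorem: for any Banach function space with the Fatou property, the second associate $X''$ coincides isometrically with $X$. Property 6 of Theorem 2 above is exactly the Fatou property for $W(L^{p),\theta},L^{q),\theta})$, so Lorentz--Luxemburg applies in the present setting.

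For the sufficiency direction, I would assume both $X$ and $X'$ have absolutely continuous norm. A first application of the preceding theorem to $X$ yields $X^{*} \cong X'$ isometrically. Since $X'$ is itself a Banach function space with absolutely continuous norm, a second application, this time to $X'$, gives $(X')^{*} \cong (X')' = X''$. Lorentz--Luxemburg then identifies $X''$ with $X$ isometrically, so $X^{**} \cong X$ under the canonical embedding and $X$ is reflexive.

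For the necessity direction, I would argue by contradiction: suppose $X$ is reflexive but lacks absolutely continuous norm, so there exist a nonnegative $f \in X$ and measurable sets $E_n \downarrow \emptyset$ with $\|f\chi_{E_n}\|_X \geq \delta > 0$. The bounded sequence $(f\chi_{E_n})$ admits, by reflexivity and Eberlein--\v{S}mulian, a weakly convergent subsequence; since $f\chi_{E_n} \downarrow 0$ pointwise and weak limits respect pointwise order in a Banach lattice, this limit must be $0$. Pairing this weak convergence with a positive norming functional in $X'$ would then force $\|f\chi_{E_{n_k}}\|_X \to 0$, contradicting the lower bound. That $X'$ inherits absolutely continuous norm follows because, under reflexivity of $X$, the dual $X^{*} \cong X'$ is itself reflexive, and the same argument transfers symmetrically to $X'$.

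The main obstacle is the order-theoretic step identifying the weak limit of $(f\chi_{E_n})$ as zero: this requires either the standard fact that weakly convergent sequences in a reflexive Banach lattice are uniformly order continuous, or a direct duality argument through $X'$ using a suitable positive testing functional. Beyond that, everything rests on two already-known ingredients, Corollary 4.3 of [3] and the Lorentz--Luxemburg theorem, together with the monotone-convergence property verified in Theorem 2 for $W(L^{p),\theta},L^{q),\theta})$; so the proof of the theorem itself reduces, modulo these citations, to the two-line chain $X^{**} \cong (X')^{*} \cong X'' \cong X$.
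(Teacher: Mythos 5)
The first thing to note is that the paper contains no proof of this statement: it is quoted as Corollary 4.4 of $\left[3\right]$ (Bennett and Sharpley) and used as a black box to derive Corollary 2, so there is no internal argument to compare yours against. Judged on its own, your sufficiency direction is the standard argument from that reference and is correct: Corollary 4.3 applied to $X$ gives $X^{\ast }\cong X^{\prime }$, applied to $X^{\prime }$ it gives $\left( X^{\prime }\right) ^{\ast }\cong X^{\prime \prime }$, and the Lorentz--Luxemburg theorem (available because the Fatou property is part of the definition of a Banach function norm, property 6 of Theorem 2 here) gives $X^{\prime \prime }=X$, so $X^{\ast \ast }\cong X$ under the canonical embedding.

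The necessity direction has a genuine gap, and it is not quite the one you flag. Identifying the weak limit of $f\chi _{E_{n}}$ as $0$ is the easier part (Mazur's lemma plus the fact that norm convergence in a Banach function space forces a.e.\ convergence of a subsequence). The step that fails as written is ``pairing this weak convergence with a positive norming functional in $X^{\prime }$ forces $\left\Vert f\chi _{E_{n_{k}}}\right\Vert _{X}\rightarrow 0$'': weak convergence to $0$ is tested against a \emph{fixed} functional, whereas the functionals norming $f\chi _{E_{n}}$ vary with $n$, so no single pairing yields the contradiction. What actually closes the argument is the lattice fact that a monotone decreasing nonnegative sequence converging weakly to $0$ converges in norm: pass to nested $E_{n}$ so that $f\chi _{E_{n}}\downarrow 0$; by Mazur, for any $\epsilon >0$ there is a convex combination $\sum_{k}\lambda _{k}f\chi _{E_{n_{k}}}$ of tail terms with norm less than $\epsilon $, and since $0\leq f\chi _{E_{M}}\leq \sum_{k}\lambda _{k}f\chi _{E_{n_{k}}}$ for $M\geq \max_{k}n_{k}$, the lattice property of the norm gives $\left\Vert f\chi _{E_{M}}\right\Vert _{X}<\epsilon $, contradicting the lower bound $\delta $. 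This is the precise content behind your remark about uniform order continuity; with that substitution, and with the observation that $X^{\prime }\cong X^{\ast }$ is then reflexive so the same argument applies to it, your outline becomes a complete proof of the cited result.
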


\begin{theorem}
The grand Wiener amalgam spaces $W\left( L^{p),\theta },L^{q),\theta
}\right) \left( c,d\right) $ has not absolute continuous norm.
\end{theorem}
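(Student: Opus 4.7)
Proof plan. The aim is to exhibit a function $f\in W(L^{p),\theta},L^{q),\theta})(c,d)$ and a decreasing sequence of measurable sets $E_{n}\downarrow\emptyset$ such that $\|f\chi_{E_{n}}\|_{W(L^{p),\theta},L^{q),\theta})}$ remains bounded below by a positive constant; this will show that $f$, and hence the space, fails to have absolutely continuous norm. Guided by Propositions 2 and 5, a natural candidate is $f(t)=(t-c)^{-1/p}$ on $(c,d)$ (possibly enhanced by a logarithmic factor, see Step 2) together with $E_{n}=(c,c+\frac{1}{n})$.

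\textbf{Step 1: reduction to a local grand Lebesgue norm.} Since $Q$ has nonempty interior, for every sufficiently large $n$ there is a fixed open set $A_{0}\subset(c,d)$, of positive measure independent of $n$, such that $Q+x\supset(c,c+\frac{1}{n})$ for all $x\in A_{0}$. For such $x$ the control function reduces to
\begin{equation*}
F_{f\chi_{E_{n}}}^{p),\theta}(x)=\|f\chi_{E_{n}}\chi_{Q+x}\|_{p),\theta}=\|f\chi_{E_{n}}\|_{p),\theta},
\end{equation*}
and the solidity of $L^{q),\theta}$ then yields
\begin{equation*}
\|f\chi_{E_{n}}\|_{W(L^{p),\theta},L^{q),\theta})}\geq \|f\chi_{E_{n}}\|_{p),\theta}\cdot\|\chi_{A_{0}}\|_{q),\theta},
\end{equation*}
with the second factor a positive constant independent of $n$.

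\textbf{Step 2: uniform positivity of $\|f\chi_{E_{n}}\|_{p),\theta}$.} Repeating the explicit calculation of Proposition 2 on the truncated interval gives
\begin{equation*}
\|(t-c)^{-1/p}\chi_{(c,c+1/n)}\|_{p),\theta}=\sup_{0<\varepsilon\leq p-1}\varepsilon^{(\theta-1)/(p-\varepsilon)}\,p^{1/(p-\varepsilon)}\,n^{-\varepsilon/(p(p-\varepsilon))}.
\end{equation*}
For $\theta=1$ the first factor equals $1$ and sending $\varepsilon\to 0$ gives the uniform lower bound $p^{1/p}$. For $\theta>1$ the singularity $(t-c)^{-1/p}$ by itself is not strong enough, so I would replace $f$ by the logarithmically enhanced function $f(t)=(t-c)^{-1/p}\bigl[\log(1/(t-c))\bigr]^{(\theta-1)/p}$ and evaluate the supremum at the $n$-dependent value $\varepsilon_{n}=1/\log n$. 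A change of variable $u=\log(1/(t-c))$ converts both $\|f\|_{p-\varepsilon}^{p-\varepsilon}$ and $\|f\chi_{E_{n}}\|_{p-\varepsilon}^{p-\varepsilon}$ to incomplete Gamma integrals, and a direct asymptotic analysis shows that with this choice of $\varepsilon_{n}$ the factors $\varepsilon_{n}^{\theta/(p-\varepsilon_{n})}$ and the growing $(\log n)^{\theta/p}$ arising from the integral exactly cancel, producing a lower bound $\|f\chi_{E_{n}}\|_{p),\theta}\geq \delta>0$ independent of $n$; the same calculation, carried out over the full interval, confirms that the enhanced $f$ lies in $L^{p),\theta}(c,d)$ and hence in $W(L^{p),\theta},L^{q),\theta})(c,d)$.

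Combining Steps 1 and 2 gives $\|f\chi_{E_{n}}\|_{W(L^{p),\theta},L^{q),\theta})}\geq \delta\cdot\|\chi_{A_{0}}\|_{q),\theta}>0$ for all large $n$, which is the required failure of absolute continuity. The main obstacle is Step 2 in the regime $\theta>1$: one must select the witness function and an $n$-dependent $\varepsilon$ so that the competing factors $\varepsilon^{(\theta-1)/(p-\varepsilon)}$ and $n^{-\varepsilon/(p(p-\varepsilon))}$ (now multiplied by a logarithmic correction) balance rather than cancel to zero. Once this balance is arranged, Step 1 is a soft reduction that only uses the solidity of $L^{q),\theta}$ and the nonempty interior of $Q$.
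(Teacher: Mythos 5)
Your overall strategy is the same as the paper's (a power-type singularity, truncations $E_{n}$ shrinking onto it, a lower bound on the truncated amalgam norms), but your Step 2 is actually more careful than the paper's argument and catches a real defect in it. The paper uses $f(t)=t^{-1/p}$ for every $\theta\geq 1$ and asserts that $\lim_{a\to 0}\|t^{-1/p}\chi_{(0,a)}\|_{W(L^{p),\theta},L^{q),\theta})}\neq 0$; but, exactly as you compute, $\|t^{-1/p}\chi_{(0,1/n)}\|_{p),\theta}=\sup_{\varepsilon}\varepsilon^{(\theta-1)/(p-\varepsilon)}p^{1/(p-\varepsilon)}n^{-\varepsilon/(p(p-\varepsilon))}$, and for $\theta>1$ the optimal $\varepsilon$ is of order $1/\log n$ and leaves an unmatched factor $(\log n)^{-(\theta-1)/p}$, so this supremum tends to $0$ (equivalently, $\varepsilon^{\theta/(p-\varepsilon)}\|t^{-1/p}\|_{p-\varepsilon}\to 0$ as $\varepsilon\to 0$, so $t^{-1/p}$ does have absolutely continuous norm in $L^{p),\theta}$ when $\theta>1$). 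The paper's formula $(p-1)^{\theta-1}p[1-x^{(p-1)/p}]$ silently replaces $a^{(p-1)/p}$ by $1$, which is where its argument breaks. Your logarithmically enhanced witness $(t-c)^{-1/p}[\log(1/(t-c))]^{(\theta-1)/p}$ with $\varepsilon_{n}\sim 1/\log n$ does repair this: the substitution $u=\log(1/(t-c))$ gives $\int_{\log n}^{\infty}e^{-u\varepsilon/p}u^{(\theta-1)(p-\varepsilon)/p}du$, and the incomplete-Gamma normalization $(p/\varepsilon_{n})^{1+\alpha}$ cancels the prefactor $\varepsilon_{n}^{\theta/(p-\varepsilon_{n})}$ up to a factor tending to $p^{\theta/p}\Gamma(\theta,c')^{1/p}>0$. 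So Step 2 is sound and genuinely necessary for $\theta>1$.

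The gap is in Step 1. With $\Omega=(c,d)$, the control function is evaluated at $x\in\Omega$, and $Q$ is a fixed compact subset of $(c,d)$, so $\min Q>c$. Then $(Q+x)\cap(c,c+1/n)=\emptyset$ for every $x\in(c,d)$ as soon as $1/n<\min Q-c$: no translate $Q+x$ even meets $E_{n}=(c,c+1/n)$, let alone contains it, so your set $A_{0}$ is empty and in fact $\|f\chi_{E_{n}}\|_{W(L^{p),\theta},L^{q),\theta})}=0$ for large $n$. The singularity must not be placed at the endpoint $c$. If instead you put it at an interior point $t_{0}$ chosen relative to $Q$ (say $(\alpha,\beta)\subset Q$ and $t_{0}>\alpha$ when $\Omega=(0,1)$, with $E_{n}=(t_{0}-1/n,t_{0}+1/n)$), then the set of admissible translates $\{x:\ Q+x\supset E_{n}\}\supset[t_{0}+1/n-\beta,\,t_{0}-1/n-\alpha]$ does meet $(0,1)$ in a set of positive measure independent of $n$, and your solidity argument goes through verbatim. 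The paper avoids the issue differently, by taking $Q=(0,a)$ and shrinking $Q$ itself while computing the partial overlaps $Q\cap(Q+x)=(x,a)$, which conflicts with $Q$ being a fixed compact window and in any case only produces a control function supported on a set of measure $a\to 0$. Once the singularity is moved to an interior point, your version is the correct one.
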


\begin{proof}
We will give the proof of this theorem for the interval $\left( c,d\right)
=\left( 0,1\right) .$ One can prove similarly this theorem for any interval $%
\left( c,d\right) .$ Take the function $f\left( t\right) =t^{-\frac{1}{p}}.$
We will show that $f$ is not absolute continuous. Let $Q=\left( 0,a\right)
\subset \left( 0,1\right) .$ Then%
\begin{equation}
\left\Vert \left( t^{-\frac{1}{p}}\chi _{Q}\right) \chi _{Q+x}\right\Vert
_{p),\theta }=\left\Vert t^{-\frac{1}{p}}\chi _{Q\cap Q+x}\right\Vert
_{p),\theta }=\left\Vert t^{-\frac{1}{p}}\chi _{\left( x,a\right)
}\right\Vert _{p),\theta }  \tag{19}
\end{equation}%
\begin{eqnarray}
&=&\sup_{0<\varepsilon \leq p-1}\varepsilon ^{\frac{\theta }{p-\varepsilon }%
}\left\Vert t^{-\frac{1}{p}}\chi _{\left( x,a\right) }\right\Vert
_{p-\varepsilon }=\sup_{0<\varepsilon \leq p-1}\varepsilon ^{\frac{\theta }{%
p-\varepsilon }}\left( \int_{x}^{a}\left\vert t^{-\frac{1}{p}}\right\vert
^{p-\varepsilon }\right) ^{\frac{1}{p-\varepsilon }}  \notag \\
&=&\sup_{0<\varepsilon \leq p-1}\varepsilon ^{\frac{\theta }{p-\varepsilon }%
}\left( \int_{x}^{a}t^{-\frac{p-\varepsilon }{p}}\right) ^{\frac{1}{%
p-\varepsilon }}=\sup_{0<\varepsilon \leq p-1}\varepsilon ^{\frac{\theta }{%
p-\varepsilon }}\left( \int_{x}^{a}t^{-1+\frac{\varepsilon }{p}}\right) ^{%
\frac{1}{p-\varepsilon }}  \notag \\
&=&\sup_{0<\varepsilon \leq p-1}\varepsilon ^{\frac{\theta }{p-\varepsilon }%
}\left( \frac{p}{\varepsilon }\left[ a^{\frac{\varepsilon }{p}}-x^{\frac{%
\varepsilon }{p}}\right] \right) ^{\frac{1}{p-\varepsilon }%
}=\sup_{0<\varepsilon \leq p-1}\varepsilon ^{\frac{\theta -1}{p-\varepsilon }%
}p^{\frac{1}{p-\varepsilon }}\left[ a^{\frac{\varepsilon }{p}}-x^{\frac{%
\varepsilon }{p}}\right] ^{\frac{1}{p-\varepsilon }}.  \notag
\end{eqnarray}%
It is easy to see that $\varepsilon ^{\frac{\theta -1}{p-\varepsilon }}$ and 
$p^{\frac{1}{p-\varepsilon }}$ are increasing\ functions of $\varepsilon .$
Since $0<a<1,$ from $\left( 19\right) $ we have%
\begin{equation}
\left\Vert \left( t^{-\frac{1}{p}}\chi _{Q}\right) \chi _{Q+x}\right\Vert
_{p),\theta }=\left( p-1\right) ^{\theta -1}p\left[ 1-x^{\frac{p-1}{p}}%
\right] .  \tag{20}
\end{equation}%
Also since $x$ tends to $0$ as $a\rightarrow 0$, from $\left( 20\right) $ we
obtain%
\begin{eqnarray*}
\lim_{a\rightarrow 0}\left\Vert t^{-\frac{1}{p}}\chi _{\left( 0,a\right)
}\right\Vert _{W\left( L^{p),\theta },L^{q),\theta }\right) }
&=&\lim_{a\rightarrow 0}\left\Vert \left\Vert t^{-\frac{1}{p}}\chi _{\left(
x,a\right) }\right\Vert _{p),\theta }\right\Vert _{q),\theta } \\
&=&\lim_{a\rightarrow 0}\left\Vert \left( p-1\right) ^{\theta -1}p\left[
1-x^{\frac{p-1}{p}}\right] \right\Vert _{q),\theta } \\
&=&\left\Vert \lim_{a\rightarrow 0}\left( p-1\right) ^{\theta -1}p\left[
1-x^{\frac{p-1}{p}}\right] \right\Vert _{q),\theta } \\
&=&\left( p-1\right) ^{\theta -1}p\left\Vert \lim_{x\rightarrow 0}\left(
1-x^{\frac{p-1}{p}}\right) \right\Vert _{q),\theta } \\
&=&\left( p-1\right) ^{\theta -1}p\left\Vert 1\right\Vert _{q),\theta }\neq
0.
\end{eqnarray*}%
Thus $f\left( t\right) =t^{-\frac{1}{p}}\in W\left( L^{p),\theta
},L^{q),\theta }\right) \left( 0,1\right) ,$ but it has not absolute
continuous norm in $W\left( L^{p),\theta },L^{q),\theta }\right) \left(
0,1\right) .$ Then by definition $3,$ $W\left( L^{p),\theta },L^{q),\theta
}\right) \left( 0,1\right) $ has not absolute continuous norm.
\end{proof}

\begin{corollary}
The grand Wiener amalgam spaces $W\left( L^{p),\theta },L^{q),\theta
}\right) \left( c,d\right) $ is not reflexive.
\end{corollary}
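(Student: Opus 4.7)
The plan is to obtain this corollary as an immediate consequence of the characterization of reflexivity for Banach function spaces (Theorem 4, quoted from Corollary 4.4 of [3]) together with the previous theorem that $W(L^{p),\theta},L^{q),\theta})(c,d)$ fails to have absolutely continuous norm. No new calculation is required; the whole content is a one-step deduction.

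First I would recall that by Theorem 2, the space $W(L^{p),\theta},L^{q),\theta})(c,d)$ is a Banach function space, so the hypothesis of Theorem 4 is satisfied and reflexivity is equivalent to both $X$ and its associate space $X'$ having absolutely continuous norm. Next I would invoke the preceding theorem, which exhibited the explicit witness $f(t)=t^{-1/p}\in W(L^{p),\theta},L^{q),\theta})(0,1)$ together with the shrinking sets $E_a=(0,a)$ showing $\lim_{a\to 0}\|f\chi_{E_a}\|_{W(L^{p),\theta},L^{q),\theta})}\neq 0$. This establishes that $W(L^{p),\theta},L^{q),\theta})(c,d)$ does not have absolutely continuous norm.

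Since one of the two conditions in Theorem 4 already fails for $X=W(L^{p),\theta},L^{q),\theta})(c,d)$, the equivalence forces $X$ to be non-reflexive, regardless of the behavior of the associate space $X'$. I would therefore simply conclude that $W(L^{p),\theta},L^{q),\theta})(c,d)$ is not reflexive. The only ``obstacle'' worth flagging is that the previous theorem was written for the model interval $(0,1)$; I would note explicitly, as that theorem already did, that the same argument applies on an arbitrary bounded interval $(c,d)$ by a trivial affine change of variables, so the corollary holds in the stated generality.
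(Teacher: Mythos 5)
Your proposal is correct and follows exactly the paper's route: cite the characterization of reflexivity for Banach function spaces (Corollary 4.4 of [3]) and combine it with the preceding theorem that $W\left( L^{p),\theta },L^{q),\theta }\right)\left( c,d\right)$ fails to have absolutely continuous norm. The paper's own proof is just this one-line deduction, so no further comparison is needed.
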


\begin{proof}
The proof is clear from Theorem 9 and Theorem 10.
\end{proof}

\begin{corollary}
The dual space $\left( W\left( L^{p),\theta },L^{q),\theta }\right) \left(
c,d\right) \right) ^{\ast }$of the grand Wiener amalgam space $W\left(
L^{p),\theta },L^{q),\theta }\right) \left( c,d\right) $ is not isometric to
the associate space $\left( W\left( L^{p),\theta },L^{q),\theta }\right)
\left( c,d\right) \right) ^{^{\prime }}$of this grand Wiener amalgam space $%
W\left( L^{p),\theta },L^{q),\theta }\right) \left( c,d\right) $.
\end{corollary}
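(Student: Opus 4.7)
The plan is to invoke Theorem 9 contrapositively. By Theorem 2, $W\left( L^{p),\theta },L^{q),\theta }\right)\left( c,d\right) $ is a Banach function space, so the hypothesis of Theorem 9 applies to it. Theorem 9 asserts that the dual of a Banach function space $X$ is isometric to its associate space $X^{\prime }$ if and only if $X$ has absolutely continuous norm.

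By Theorem 10, $W\left( L^{p),\theta },L^{q),\theta }\right)\left( c,d\right) $ does not have absolutely continuous norm; indeed, the explicit function $f(t)=t^{-1/p}$ produced in that proof gives a concrete witness, since $\left\Vert f\chi_{(0,a)}\right\Vert _{W\left( L^{p),\theta },L^{q),\theta }\right)}$ does not tend to zero as $a\rightarrow 0$, even though $(0,a)\rightarrow \emptyset $. Hence the only-if direction of Theorem 9 forces
\begin{equation*}
\left( W\left( L^{p),\theta },L^{q),\theta }\right)\left( c,d\right)\right)^{\ast }\not\cong \left( W\left( L^{p),\theta },L^{q),\theta }\right)\left( c,d\right)\right)^{\prime }
\end{equation*}
isometrically, which is the claimed conclusion.

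The step requiring care is just to verify that Theorem 9 is actually applicable, i.e.\ that all the standing hypotheses of the Bennett--Sharpley theory (Banach function norm satisfying properties 1--8 of Theorem 2, lattice property, Fatou property, finiteness on sets of finite measure, and the local integrability property 8) are in force; these have already been established for our space in Theorem 2. There is no genuine obstacle beyond this bookkeeping, since Theorem 10 supplies the non-absolute-continuity and Theorem 9 supplies the abstract equivalence.
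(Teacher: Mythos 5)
Your proposal is correct and follows exactly the paper's own (one-line) argument: combine the Bennett--Sharpley criterion that $X^{\ast}$ is isometric to $X^{\prime}$ if and only if $X$ has absolutely continuous norm with the theorem that $W\left(L^{p),\theta},L^{q),\theta}\right)\left(c,d\right)$ fails to have absolutely continuous norm (witnessed by $t^{-1/p}$). The only discrepancy is a labelling one: the criterion you quote is the paper's restatement of Corollary~4.3 of Bennett--Sharpley (which the paper's corollary cites as ``Theorem 8,'' reserving ``Theorem 9'' for the reflexivity criterion), but since you state its content explicitly this does not affect the argument.
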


\begin{proof}
The proof of this theorem is easy from Theorem 8 and Theorem 10.
\end{proof}

\begin{theorem}
Let $\frac{p}{p-1}=p^{^{\prime }},\frac{q}{q-1}=q^{^{\prime }}$ and $%
1<p,q<\infty .$ Then 
\begin{equation*}
\left( W\left( L^{p),\theta },L^{q),\theta }\right) \left( \Omega \right)
\right) ^{^{\prime }}=W\left( L^{p)^{\prime },\theta },L^{q)^{\prime
},\theta }\right) \left( \Omega \right) ,
\end{equation*}%
and the norms $\left\Vert g\right\Vert _{W\left( L^{p)^{^{\prime
}}},L^{q)^{^{\prime }}}\right) }$ and%
\begin{equation*}
\left\Vert g\right\Vert _{W\left( L^{p),\theta },L^{q),\theta }\right)
^{^{\prime }}}=\left\{ 
\begin{array}{c}
\sup \dint\limits_{\Omega }\left\vert f\left( x\right) g\left( x\right)
\right\vert dx:f\in W\left( L^{p),\theta },L^{q),\theta }\right) \left(
\Omega \right) ,\text{ } \\ 
\left\Vert f\right\Vert _{W\left( L^{p)},L^{q)}\right) }\leq 1%
\end{array}%
\right\} 
\end{equation*}%
are equivalent.
\end{theorem}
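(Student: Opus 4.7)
The plan is to prove the claimed norm equivalence by establishing two one-sided estimates, exploiting the general amalgam principle that the associate space of $W(A,B)$ is $W(A',B')$ adapted to the generalized grand Lebesgue setting, where by the very definition recalled in Section~1 the associate space of $L^{p),\theta}$ is the small Lebesgue space $L^{p)^{\prime},\theta}$, and likewise for $q$.

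For the direction $\|g\|_{W(L^{p),\theta},L^{q),\theta})^{\prime}} \leq C_{1}\|g\|_{W(L^{p)^{\prime},\theta},L^{q)^{\prime},\theta})}$, I would fix $f$ with $\|f\|_{W(L^{p),\theta},L^{q),\theta})}\leq 1$ and start from the covering identity
\begin{equation*}
\int_{\Omega}|f(y)g(y)|\,d\mu(y)=\frac{1}{\mu(Q)}\int_{\Omega}\!\!\int_{\Omega}|f(y)g(y)|\chi_{Q+x}(y)\,d\mu(y)\,dx,
\end{equation*}
which is justified by Fubini together with $\int_{\Omega}\chi_{Q+x}(y)\,dx=\mu(Q)$. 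The inner integral, for each fixed $x$, is then bounded by $F_{f}^{p),\theta}(x)\cdot F_{g}^{p)^{\prime},\theta}(x)$ using the Hölder inequality $\int|uv|\,d\mu\leq\|u\|_{p),\theta}\|v\|_{p)^{\prime},\theta}$ that encodes the definition of the small Lebesgue norm as an associate norm. A second Hölder inequality in the variable $x$, this time for the pair $(L^{q),\theta},L^{q)^{\prime},\theta})$, produces the target bound, with the constant $C_{1}=\mu(Q)^{-1}$ (up to a factor from the second Hölder).

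For the reverse direction $\|g\|_{W(L^{p)^{\prime},\theta},L^{q)^{\prime},\theta})}\leq C_{2}\|g\|_{W(L^{p),\theta},L^{q),\theta})^{\prime}}$, I would realize both the outer and inner norms of $\|F_{g}^{p)^{\prime},\theta}\|_{q)^{\prime},\theta}$ as suprema over unit balls in the associate spaces: for arbitrary $\eta>0$, choose $k\in L^{q),\theta}$ with $\|k\|_{q),\theta}\leq 1$ nearly attaining $\|F_{g}^{p)^{\prime},\theta}\|_{q)^{\prime},\theta}$, and for each $x$ a function $h_{x}$ supported in $Q+x$ with $\|h_{x}\|_{p),\theta}\leq 1$ nearly attaining $\|g\chi_{Q+x}\|_{p)^{\prime},\theta}$. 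The composite $f(y):=\mu(Q)^{-1}\int_{\Omega}k(x)h_{x}(y)\,dx$ will then lie in $W(L^{p),\theta},L^{q),\theta})$ with norm controlled by $\|k\|_{q),\theta}$ (by applying the same iterated-Hölder/covering reasoning as above), and the integral $\int|fg|\,d\mu$ will reproduce, up to a multiplicative constant, $\int k(x)F_{g}^{p)^{\prime},\theta}(x)\,dx$, whose supremum over $k$ is the desired norm of $F_{g}^{p)^{\prime},\theta}$.

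The main obstacle will be the measurable-selection step for $\{h_{x}\}_{x\in\Omega}$ making $(x,y)\mapsto h_{x}(y)$ jointly measurable; this can be addressed either by a standard Kuratowski--Ryll-Nardzewski selection theorem on the unit ball of $L^{p),\theta}$ (restricted to $Q+x$) or by first proving the inequality for simple $g$ and then extending by the Fatou-type monotone continuity property 6 of Theorem~2. Constants have to be tracked carefully because the generalized grand Lebesgue spaces are not translation invariant and the two directions produce constants depending on $\mu(Q)$, $\mu(\Omega)$, $p$, $q$, and $\theta$; finiteness of $\mu(\Omega)$ guarantees these remain bounded. Combining the two estimates yields $\left(W(L^{p),\theta},L^{q),\theta})(\Omega)\right)^{\prime}=W(L^{p)^{\prime},\theta},L^{q)^{\prime},\theta})(\Omega)$ as sets with equivalent (not necessarily equal) norms, exactly as claimed.
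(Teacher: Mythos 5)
Your first half coincides with the paper's: the easy inclusion $W(L^{p)^{\prime},\theta},L^{q)^{\prime},\theta})\subset\bigl(W(L^{p),\theta},L^{q),\theta})\bigr)^{\prime}$ is obtained in both cases by applying H\"{o}lder twice, once inside on $f\chi_{Q+x}\cdot g\chi_{Q+x}$ for the pair $\bigl(L^{p),\theta},L^{p)^{\prime},\theta}\bigr)$ and once outside in $x$ for the pair $\bigl(L^{q),\theta},L^{q)^{\prime},\theta}\bigr)$; the paper hides your covering identity inside the assertion $\Vert fg\Vert_{1}=\Vert fg\Vert_{W(L^{1},L^{1})}$, so your version with the explicit $\mu(Q)^{-1}$ is if anything more honest about the constant. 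The two arguments genuinely diverge on the hard inclusion and on norm equivalence. For the reverse inclusion the paper gives no argument at all, merely writing that ``the same technic'' as a cited Theorem 11.7.1(c) applies, whereas you propose the standard constructive attack: build a near-extremal test function $f(y)=\mu(Q)^{-1}\int k(x)h_{x}(y)\,dx$ from a near-extremizer $k$ for the outer norm and near-extremizers $h_{x}$ for the inner norms. This is the right skeleton, and you correctly identify joint measurability of $(x,y)\mapsto h_{x}(y)$ as the real work; you also need (and implicitly use) the bound $\Vert F_{f}^{p),\theta}\Vert_{q),\theta}\lesssim\Vert k\Vert_{q),\theta}$, which amounts to boundedness of convolution with $\mu(Q)^{-1}\chi_{Q-Q}$ on $L^{q),\theta}$ --- fine on a finite measure space but worth stating. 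For the equivalence of norms the paper takes a softer route, invoking the two-norm (bounded inverse) theorem once the two Banach spaces are shown equal as sets, which costs nothing but yields no constants; your two-sided estimates, if completed, give explicit constants depending on $\mu(Q)$, $\mu(\Omega)$, $p$, $q$, $\theta$. One correction: the identification of the associate space of $L^{p),\theta}$ with the small Lebesgue space $L^{p)^{\prime},\theta}$ is not ``by the very definition recalled in Section 1'' --- Section 1 defines $L^{p)^{\prime},\theta}$ by an infimal decomposition norm, and the fact that this coincides with the associate norm of the grand space is a theorem of Fiorenza and Capone--Fiorenza which both you and the paper must cite; your argument is otherwise unaffected.
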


\begin{proof}
If $f\in W\left( L^{p),\theta },L^{q),\theta }\right) \left( \Omega \right) $
and $g\in W\left( L^{p)^{\prime },\theta },L^{q)^{\prime },\theta }\right)
\left( \Omega \right) ,$ then%
\begin{equation*}
\left\Vert f\right\Vert _{W\left( L^{p),\theta },L^{q),\theta }\right)
}.\left\Vert g\right\Vert _{W\left( L^{p)^{\prime },\theta },L^{q)^{\prime
},\theta }\right) }=\left\Vert \left\Vert f.\chi _{Q+x}\right\Vert
_{p),\theta }\right\Vert _{q),\theta }\left\Vert \left\Vert g.\chi
_{Q+x}\right\Vert _{p)^{^{\prime }},\theta }\right\Vert _{q)^{^{\prime
}},\theta }.
\end{equation*}%
Since $\left\Vert f.\chi _{Q+x}\right\Vert _{p),\theta }\in L^{q),\theta }$
and $\left\Vert g.\chi _{Q+x}\right\Vert _{L^{p)^{\prime },\theta }}\in
L^{q)^{\prime },\theta },$ by the H\"{o}lder inequality for the generalized
grand Lebesgue space $\left\Vert f.\chi _{Q+x}\right\Vert _{p),\theta
}\left\Vert g.\chi _{Q+x}\right\Vert _{L^{p)^{\prime },\theta }}\in
L^{1}\left( \Omega \right) $ and%
\begin{equation}
\left\Vert \left\Vert f.\chi _{Q+x}\right\Vert _{p),\theta }\left\Vert
g.\chi _{Q+x}\right\Vert _{p)^{^{\prime }},\theta }\right\Vert _{1}\leq
\left\Vert \left\Vert f.\chi _{Q+x}\right\Vert _{p),\theta }\right\Vert
_{q),\theta }\left\Vert \left\Vert g.\chi _{Q+x}\right\Vert _{p)^{^{\prime
}},\theta }\right\Vert _{q)^{^{\prime }},\theta }.  \tag{21}
\end{equation}%
Also since $f.\chi _{Q+x}\in L^{p),\theta }$ and $g.\chi _{Q+x}\in
L^{p)^{\prime },\theta },$ one more applying the H\"{o}lder inequality for
the generalized Lebesgue space we have%
\begin{equation}
\left\Vert fg\right\Vert _{1}=\left\Vert fg\right\Vert _{W\left(
L^{1},L^{1}\right) }=\left\Vert \left\Vert \left( f.\chi _{Q+x}\right)
\left( g.\chi _{Q+x}\right) \right\Vert _{1}\right\Vert _{1}  \tag{22}
\end{equation}%
\begin{equation}
\leq \left\Vert \left\Vert f.\chi _{Q+x}\right\Vert _{p),\theta }\left\Vert
g.\chi _{Q+x}\right\Vert _{L^{p)^{\prime },\theta }}\right\Vert _{1}.  \notag
\end{equation}%
Combining $\left( 21\right) $ and $\left( 22\right) $ we obtain \ 
\begin{equation*}
\dint\limits_{\Omega }\left\vert f\left( x\right) .g\left( x\right)
dx\right\vert =\left\Vert f.g\right\Vert _{1}\leq \left\Vert f\right\Vert
_{W\left( L^{p)},L^{q)}\right) }.\left\Vert g\right\Vert _{W\left(
L^{p)^{^{\prime }}},L^{q)^{^{\prime }}}\right) }.
\end{equation*}%
From this inequality we have%
\begin{equation*}
\left\Vert g\right\Vert _{W\left( L^{p),\theta },L^{q),\theta }\right)
^{^{\prime }}}=\left\{ 
\begin{array}{c}
\sup \dint\limits_{\Omega }\left\vert f\left( x\right) g\left( x\right)
\right\vert dx:f\in W\left( L^{p),\theta },L^{q),\theta }\right) \left(
\Omega \right) , \\ 
\text{ }\left\Vert f\right\Vert _{W\left( L^{p)},L^{q)}\right) }\leq 1%
\end{array}%
\right\} \leq \left\Vert g\right\Vert _{W\left( L^{p)^{^{\prime
}}},L^{q)^{^{\prime }}}\right) }.
\end{equation*}%
This implies%
\begin{equation*}
W\left( L^{p)^{\prime },\theta },L^{q)^{\prime },\theta }\right) \left(
\Omega \right) \subset \left( W\left( L^{p),\theta },L^{q),\theta }\right)
\left( \Omega \right) \right) ^{^{\prime }}
\end{equation*}%
If one uses the same technic in $\left( \left[ 13\right] ,\text{Theorem }%
11.7.1\left( c\right) \right) ,$ obtains 
\begin{equation*}
W\left( L^{p)^{\prime },\theta },L^{q)^{\prime },\theta }\right) \left(
\Omega \right) \supset \left( W\left( L^{p),\theta },L^{q),\theta }\right)
\left( \Omega \right) \right) ^{^{\prime }}.
\end{equation*}%
Thus we have 
\begin{equation*}
W\left( L^{p)^{\prime },\theta },L^{q)^{\prime },\theta }\right) \left(
\Omega \right) =\left( W\left( L^{p),\theta },L^{q),\theta }\right) \left(
\Omega \right) \right) ^{^{\prime }}.
\end{equation*}%
Since the spaces $W\left( L^{p)^{\prime },\theta },L^{q)^{\prime },\theta
}\right) \left( \Omega \right) $ and $\left( W\left( L^{p),\theta
},L^{q),\theta }\right) \left( \Omega \right) \right) ^{^{\prime }}$ are
equal and they are Banach spaces with respect to the norms $\left\Vert
.\right\Vert _{W\left( L^{p)^{^{\prime }}},L^{q)^{^{\prime }}}\right) }$ and 
\begin{equation*}
\left\Vert g\right\Vert _{W\left( L^{p),\theta },L^{q),\theta }\right)
^{^{\prime }}}=\left\{ \sup \dint\limits_{\Omega }\left\vert f\left(
x\right) g\left( x\right) \right\vert dx:f\in W\left( L^{p),\theta
},L^{q),\theta }\right) \left( \Omega \right) ,\text{ }\left\Vert
f\right\Vert _{W\left( L^{p)},L^{q)}\right) }\leq 1\right\} ,
\end{equation*}%
respectively, then these norms are equivalent by Two \ norm theorem (see
Theorem 7.3.3, $\left[ 19\right] )$
\end{proof}

\end{document}